\newcommand{\ZZ}{\mathbb{Z}}
\newcommand{\NN}{\mathbb{N}}
\newcommand{\FF}{\mathbb{F}}
\theoremstyle{plain}
\newtheorem{thm}{Theorem}[section]
\newtheorem{lemm}[thm]{Lemma}
\newtheorem{prop}[thm]{Proposition}
\newtheorem{dfn}[thm]{Definition}
\def\@seccntformat#1{\protect\textup{\protect\@secnumfont\expandafter\protect\csname format#1\endcsname\csname the#1\endcsname\protect\@secnumpunct}}
\newcommand{\hide}[1]{\iffalse #1\fi}
\newcommand{\vct}[1]{\mathbf{#1}}
\newcommand{\zint}{\,..\,}
\newcommand{\nth}{^{\text{th}}}
\DeclareMathOperator{\moddec}{mod}
\renewcommand{\mod}[1]{\,(\moddec #1)}
\newcommand{\clabel}[2]{\protected@write \@auxout {}{\string \newlabel {#1}{{#2}{\thepage}{#2}{#1}{}} }\hypertarget{#1}{}}
\theoremstyle{remark}
\newtheorem{remark}[thm]{Remark}
\newcommand{\lb}{\allowbreak}
\newcommand{\breaklist}[2][,\lb]{\def\nextitem{\def\nextitem{#1}}\renewcommand*{\do}[1]{\nextitem{##1}}\docsvlist{#2}}
\DeclareMathOperator{\cirdec}{circ}	
\renewcommand{\cir}[1]{\cirdec(\breaklist{#1})}
\newcommand{\pcir}[2]{\cirdec_{#1}(\breaklist{#2})}
\newcommand{\floor}[1]{\lfloor#1\rfloor}
\newcommand{\BM}[1]{\boldmath {$#1$}\unboldmath}
\DeclareMathOperator{\autdec}{Aut}
\newcommand{\aut}[1]{\autdec(#1)}
\newcommand{\vctg}[1]{\boldsymbol{#1}}
\renewcommand*\env@matrix[1][*\c@MaxMatrixCols c]{\hskip -\arraycolsep\let\@ifnextchar\new@ifnextchar\array{#1}}
\newcommand{\hip}{\overline}
\numberwithin{equation}{section}
\newcommand{\geteq}{\tag{\theequation}\stepcounter{equation}}
\begin{document}

\title[]{Quaternary Hermitian self-dual codes of lengths 26, 32, 36, 38 and 40 from modifications of well-known circulant constructions}

\author{A. M. Roberts}

\address{Department of Mathematical and Physical Sciences\\
Thornton Science Park\\
University of Chester\\
England}
\email{adammichaelroberts@outlook.com}

\keywords{Hermitian self-dual codes, codes over rings, $\lambda$-circulant matrix, optimal codes, best known codes}

\begin{abstract}
In this work, we give three new techniques for constructing Hermitian self-dual codes over commutative Frobenius rings with a non-trivial involutory automorphism using $\lambda$-circulant matrices. The new constructions are derived as modifications of various well-known circulant constructions of self-dual codes. Applying these constructions together with the building-up construction, we construct many new best known quaternary Hermitian self-dual codes of lengths 26, 32, 36, 38 and 40.
\end{abstract}

\maketitle
\section{Introduction}
Hermitian self-dual codes form a class of linear codes which are self-dual with respect to the Hermitian inner product. Let $d(n)$ be the minimum distance of a quaternary Hermitian self-dual code of length $n$. An upper bound on $d(n)$ was given in \cite{1} as
	\begin{equation*}
	d(n)\leq 2\floor{n/6}+2.
	\end{equation*}

A Hermitian self-dual code whose minimum distance meets its corresponding bound is called \textit{extremal}. A Hermitian self-dual code with the highest possible minimum distance for its length is said to be \textit{optimal}. Extremal codes are necessarily optimal but optimal codes are not necessarily extremal. A \textit{best known} Hermitian self-dual code is a Hermitian self-dual code with the highest known minimum distance for its length. 

The existence of an extremal quaternary Hermitian self-dual code for lengths greater than 30 is still an open problem. It was proved in \cite{14} that there exists no extremal quaternary Hermitian self-dual code of length 26. A complete classification of quaternary Hermitian self-dual codes for lengths up to 22 is given in \cite{15}. Optimal quaternary Hermitian self-dual codes of length 24 possessing non-trivial automorphisms of order $\geq 3$ are classified in \cite{18,16}. Extremal quaternary Hermitian self-dual codes of length 28 possessing non-trivial odd order automorphisms are classified in \cite{19,20}. In \cite{11}, optimal and best known quaternary Hermitian self-dual of codes of lengths 24, 26, 32 and 34 are constructed. A classification of double circulant quaternary Hermitian self-dual codes for lengths up to 26 is given in \cite{12} and this classification is extended to lengths up to 40 in \cite{13}. In \cite{22}, it was proved that up to equivalence, there exists precisely one extremal quaternary Hermitian self-dual code of length 30 possessing a non-trivial automorphism of odd prime order.

The main purpose of this work is to use circulant matrices in order to construct Hermitian self-dual codes which are best known in the literature. A circulant matrix is a special type of Toeplitz matrix which is completely determined by a single vector. When searching for $n\times n$ matrices to use in constructions of (Hermitian) self-dual codes, by assuming these matrices are circulant we reduce the size of the search field from $n^2$ to $n$. For this reason, circulant matrices have been used extensively to construct (Hermitian) self-dual codes. See \cite{23,24,25,26,27,28,29} for recent utilisation of circulant matrices in constructing self-dual codes. In this work, we give three different modifications of various well-known circulant constructions of self-dual codes, which we apply to construct optimal and best known quaternary Hermitian self-dual codes. All of the new techniques can be used to construct Hermitian self-dual codes over any commutative Frobenius ring $R$ with a fixed non-trivial involutory automorphism $\hip{\phantom{a}}:R\to R$. We introduce these techniques and provide the conditions needed to produce a Hermitian self-dual code.

For the proofs of these techniques, we utilise a specialised mapping $\Theta$ which was used in \cite{4,17}. This mapping is inherently associated with the matrix product $B\hip{A}{}^T$, where $A$ and $B$ are $\lambda$-circulant matrices over $R$ such that $\lambda\hip{\lambda}=1$. If $A$ is the $\lambda$-circulant matrix generated by $\vct{a}\in R^n$, then using $\Theta$ allows us to verify the equality $A\hip{A}{}^T=-I_n$ by computing the values of $\floor{n/2}+1$ quantities in terms of $\vct{a}$. This eliminates the need to construct $A$ from its generating vector as well as computing the matrix product $A\hip{A}{}^T$ itself, which improves computational efficiency. We give and prove our own results concerning $\Theta$ as done so in \cite{4,17} and we further generalise them with respect to the Hermitian inner product.

Using the new techniques together with the building-up construction, we find many Hermitian self-dual codes with weight enumerator parameters of previously unknown values (relative to referenced sources). In total, 408 new codes are found, including 
	\begin{enumerate}[label=$\bullet$]
	\item 71 quaternary Hermitian self-dual $[26,13,8]$-codes;
	\item 82 quaternary Hermitian self-dual $[32,16,10]$-codes;
	\item 2 quaternary Hermitian self-dual $[36,18,12]$-codes;
	\item 1 quaternary Hermitian self-dual $[38,19,12]$-code;
	\item 252 quaternary Hermitian self-dual $[40,20,12]$-codes.
	\end{enumerate}

The rest of the work is organised as follows. In Section 2, we give preliminary definitions and results on Hermitian self-dual codes, Gray maps, circulant matrices, the specialised mapping $\Theta$ and the alphabets which we use. In Section 3, we introduce the new techniques and conditions needed to produce a Hermitian self-dual code. In Section 4, we apply the new techniques and the building-up construction to obtain the new Hermitian self-dual codes, whose weight enumerator parameter values we detail. We also tabulate the results in this section.

\section{Preliminaries}

\subsection{Hermitian Self-Dual Codes }

Let $R$ be a commutative Frobenius ring with a fixed non-trivial involutory automorphism $\hip{\phantom{a}}:R\to R$ (see \cite{9} for a full description of Frobenius rings and codes over Frobenius rings). Throughout this work, we always assume $R$ has unity. A code $\mathcal{C}$ of length $n$ over $R$ is a subset of $R^n$ whose elements are called codewords. If $\mathcal{C}$ is a submodule of $R^n$, then we say that $\mathcal{C}$ is linear. Let $\vct{x},\vct{y}\in R^n$ where $\vct{x}=(x_1,x_2,\ldots,x_n)$ and $\vct{y}=(y_1,y_2,\ldots,y_n)$. The Hermitian dual $\mathcal{C}^{\bot_H}$ of $\mathcal{C}$ is given by
	\begin{equation*}
	\mathcal{C}^{\bot_H}=\{\mathbf{x}\in R^n: \langle\mathbf{x},\mathbf{y}\rangle_H=0,\forall\mathbf{y}\in \mathcal{C}\},
	\end{equation*}	
where $\langle,\rangle_H$ denotes the Hermitian inner product defined by
	\begin{equation*}
	\langle\mathbf{x},\mathbf{y}\rangle_H=\sum_{i=1}^nx_i\hip{y_i}.
	\end{equation*}

We say that $\mathcal{C}$ is Hermitian self-orthogonal if $\mathcal{C}\subseteq \mathcal{C}^{\bot_H}$ and Hermitian self-dual if $\mathcal{C}=\mathcal{C}^{\bot_H}$.

If $R=\FF_{p^{2m}}$ for some prime $p$ and $m\in\NN$, then we define the involutory automorphism $\hip{a}=a^{p^m}$, $\forall a\in R$. We can extend this to the finite commutative ring $R=\FF_{p^{2m}}+u\FF_{p^{2m}}$ where
	\begin{equation*}
	\FF_{p^{2m}}+u\FF_{p^{2m}}=\{a+bu:a,b\in\FF_{p^{2m}},u^2=0\}
	\end{equation*}
and we define $\hip{a+bu}=a^{p^m}+b^{p^m}u$, $\forall a,b\in R$.

Two codes $\mathcal{C}$ and $\mathcal{C}'$ over $R$ are said to be \textit{conjugation equivalent} or simply \textit{equivalent} if there exists a monomial matrix $M$ over $R$ and an automorphism $\nu$ of $R$ such that $\mathcal{C}'=\nu(\mathcal{C}M)=\{\nu(\vct{c}M):\vct{c}\in \mathcal{C}\}$. If $\mathcal{C}=\nu(\mathcal{C}M)$, then $M$ and $\nu$ are said to form an \textit{automorphism} of $\mathcal{C}$. The set of all automorphisms of $\mathcal{C}$ forms the \textit{automorphism group} $\aut{\mathcal{C}}$ of $\mathcal{C}$.

\subsection{Alphabets}

In this paper, we consider the alphabets $\FF_{4}$ and $\FF_{4}+u\FF_{4}$.

We define $\FF_4\cong\FF_2[\omega]/\langle \omega^2+\omega+1\rangle$ so that
	\begin{equation*}
	\FF_4=\{a{\omega}+b(1+\omega): a,b\in\FF_2,\omega^2+\omega+1=0\}.
	\end{equation*}

Define
	\begin{equation*}
	\FF_4+u\FF_4=\{a+bu: a,b\in\FF_4,u^2=0\}.
	\end{equation*}

Then $\FF_4+u\FF_4$ is a commutative ring of order 16 and characteristic 2 such that $\FF_4+u\FF_4\cong\FF_4[u]/\langle u^2\rangle\cong\FF_2[\omega,u]/\langle \omega^2+\omega+1,u^2,\omega u+u\omega\rangle$.

We recall the following Gray map from \cite{2}:
	\begin{align*}
	\varphi_{\FF_4+u\FF_4}&:(\FF_4+u\FF_4)^n\to\FF_4^{2n},\\
		&\quad a+bu\mapsto(b,a+b),\,a,b\in\FF_4^n.
	\end{align*}

It was shown in \cite{3} that if $\mathcal{C}$ is a Hermitian self-dual code over $\FF_4+u\FF_4$ of length $n$, then $\varphi_{\FF_4+u\FF_4}(\mathcal{C})$ is a Hermitian self-dual code over $\FF_4$ of length $2n$. The Lee weight of a vector $\vct{x}\in(\FF_4+u\FF_4)^n$ can be defined to be $w_L(\vct{x})=n_1(\vct{x})+2n_2(\vct{x})$ where $n_1(\vct{x})$ is the number of components of $\vct{x}$ equal to $a+bu$ with $a=b$ or $b=0$ and $n_2(\vct{x})$ is the number of components of $\vct{x}$ equal to $a+bu$ with $a\neq b$ and $b\neq 0$. It is true that $\varphi_{\FF_4+u\FF_4}$ is an isometry from $(\FF_4+u\FF_4)^n$ under Lee distance to $\FF_4^{2n}$ under Hamming distance. In this way, the minimum Lee distance and Lee weight enumerator of a code $\mathcal{C}$ over $\FF_4+u\FF_4$ are equal to the minimum Hamming distance and Hamming weight enumerator of $\varphi_{\FF_4+u\FF_4}(\mathcal{C})$, respectively.
	
\subsection{Special Matrices}

We now recall the definitions and properties of some special matrices which we use in our work. Let $\vct{a}=(a_0,a_1,\ldots,a_{n-1})\in R^n$ where $R$ is a commutative ring and let
	\begin{equation*}
	A=\begin{pmatrix}
	a_0 & a_1 & a_2 & \cdots & a_{n-1}\\
	\lambda a_{n-1} & a_0 & a_1 & \cdots & a_{n-2}\\
	\lambda a_{n-2} & \lambda a_{n-1} & a_0 & \cdots & a_{n-3}\\
	\vdots & \vdots & \vdots & \ddots & \vdots\\
	\lambda a_1 & \lambda a_2 & \lambda a_3 & \cdots & a_0
	\end{pmatrix},
	\end{equation*}
where $\lambda\in R$. Then $A$ is called the $\lambda$-circulant matrix generated $\vct{a}$, denoted by $A=\pcir{\lambda}{\vct{a}}$. If $\lambda=1$, then $A$ is called the circulant matrix generated by $\vct{a}$ and is more simply denoted by $A=\cir{\vct{a}}$. If we define the matrix
	\begin{equation*}
	P_{\lambda}=\begin{pmatrix}
	\vct{0} & I_{n-1}\\
	\lambda & \vct{0}
	\end{pmatrix},
	\end{equation*}
then it follows that $A=\sum_{i=0}^{n-1}a_iP_{\lambda}^i$. Clearly, the sum of any two $\lambda$-circulant matrices is also a $\lambda$-circulant matrix. If $B=\pcir{\lambda}{\vct{b}}$ where $\vct{b}=(b_0,b_1,\ldots,b_{n-1})\in R^n$, then $AB=\sum_{i=0}^{n-1}\sum_{j=0}^{n-1}a_ib_jP_{\lambda}^{i+j}$. Since $P_{\lambda}^n=\lambda I_n$ there exist $c_k\in R$ such that $AB=\sum_{k=0}^{n-1}c_kP_{\lambda}^k$ so that $AB$ is also $\lambda$-circulant. In fact, it is true that
	\begin{equation*}
	c_{k}=\sum_{\substack{[i+j]_n=k\\i+j<n}}a_ib_j+\sum_{\substack{[i+j]_n=k\\i+j\geq n}}\lambda a_ib_j=\vct{x}_1\vct{y}_{k+1}
	\end{equation*}
for $k\in[0\zint n-1]$, where $\vct{x}_i$ and $\vct{y}_i$ respectively denote the $i\nth$ row and column of $A$ and $B$ and $[i+j]_n$ denotes the smallest non-negative integer such that $i\equiv j\mod{n}$. From this, we can see that $\lambda$-circulant matrices commute multiplicatively. Moreover, if $\lambda$ is a unit in $R$, then $A^T$ is $\lambda^{-1}$-circulant such that $A^T=a_0I_n+\lambda\sum_{i=1}^{n-1}a_{n-i}P_{\lambda^{-1}}^i$. It follows then that $AA^T$ is $\lambda$-circulant if and only if $\lambda$ is involutory in $R$, i.e. $\lambda^2=1$.

Let $A_i=\pcir{\mu}{\vct{a}_i}$ where $\vct{a}_i\in R^n$ for $i\in[0\zint n-1]$. Then we can interpret $\pcir{\lambda}{(A_0,A_1,\ldots,A_{n-1})}$ as a $\lambda$-circulant matrix with respect to its $\mu$-circulant blocks $A_i$ and we call such a matrix a block $(\lambda,\mu)$-circulant matrix.

Let $X$ be an $m\times n$ matrix over a commutative ring $R$ with a fixed non-trivial involutory automorphism $\hip{\phantom{a}}:R\to R$ whose $(i,j)\nth$ entry is $x_{i,j}$. Then $\hip{X}$ denotes the $m\times n$ matrix whose $(i,j)\nth$ entry is $\hip{x_{i,j}}$. Since $\hip{\phantom{a}}$ is an involutory automorphism, it obeys equalities such as $\hip{I_n}=I_n$, $\hip{\hip{X}}=X$, $\hip{-X}=-\hip{X}$ and $\hip{X}\,\hip{Y}=\hip{XY}$ for any $n\times m$ matrix $Y$ over $R$. If $A$ is $\lambda$-circulant such that $A=\sum_{i=0}^{n-1}a_iP_{\lambda}^i$, then $\hip{A}=\sum_{i=0}^{n-1}\hip{a_i}\,\hip{P_{\lambda}^i}=\sum_{i=0}^{n-1}\hip{a_i}P_{\hip{\lambda}}^i$ so that $\hip{A}$ is $\hip{\lambda}$-circulant. Moreover, if $\hip{\lambda}$ is a unit in $R$, then $\hip{A}{}^T$ is $\hip{\lambda}{}^{-1}$-circulant such that $\hip{A}{}^T=\hip{a_0}I_n+\hip{\lambda}\sum_{i=1}^{n-1}\hip{a_{n-i}}P_{\hip{\lambda}{}^{-1}}^i$. It follows then that $A\hip{A}{}^T$ is $\lambda$-circulant if and only if $\lambda\hip{\lambda}=1$.

Let $J_n$ be an $n\times n$ matrix over $R$ whose $(i,j)\nth$ entry is $1$ if $i+j=n+1$ and 0 if otherwise. Then $J_n$ is called the $n\times n$ exchange matrix and corresponds to the row-reversed (or column-reversed) version of $I_n$. We see that $J_n$ is both symmetric and involutory, i.e. $J_n=J_n^T$ and $J_n^2=I_n$. For any matrix $A\in R^{m\times n}$, premultiplying $A$ by $J_m$ and postmultiplying $A$ by $J_n$ inverts the order in which the rows and columns of $A$ appear, respectively. Namely, the $(i,j)\nth$ entries of $J_mA$ and $AJ_n$ are the $([1-i]_m,j)\nth$ and $(i,[1-j]_n)\nth$ entries of $A$, respectively. Note that $[i+j]_n$ corresponds to the $(i,j)\nth$ entry of the matrix $J_nV$ where $V=\cir{(0,1,2,\ldots,n-1)}$.

\subsection{A Special Mapping}
We now introduce and explore the properties of a mapping which was used in \cite{4,17}. The mapping is inherently associated with the matrix product $B\hip{A}{}^T$, where $A$ and $B$ are $\lambda$-circulant matrices such that $\lambda\hip{\lambda}=1$. By utilising $\Theta$, we are able to improve the computational efficiency of our algorithms.
	\begin{dfn}\textup{(\cite{4,17})}
	Let $R$ be a commutative ring and let $n\in\NN$ be fixed. Let $\Theta:R^n\times R^n\times\ZZ_n\to R$ be a mapping with an optional argument $\lambda\in R$ defined by
		\begin{equation*}
		\Theta(\vct{x},\vct{y},j)[\lambda]=\sum_{i=0}^{n-j-1}x_{[i+j]_n}y_i+\lambda\sum_{i=n-j}^{n-1}x_{[i+j]_n}y_i,
		\end{equation*}		
	where $\vct{x}=(x_0,x_1,\ldots,x_{n-1}),\vct{y}=(y_0,y_1,\ldots,y_{n-1})\in R^n$ and $j\in[0\zint n-1]$.
		
	If $j=0$, we define
		\begin{equation*}
		\Theta(\vct{x},\vct{y},0)=\sum_{i=0}^{n-1}x_iy_i=\vct{x}\vct{y}{}^T,
		\end{equation*}
	which is independent of $\lambda$.
		
	If $\lambda$ is unspecified, then we assume $\lambda=1$ so that
		\begin{equation*}
		\Theta(\vct{x},\vct{y},j)=\sum_{i=0}^{n-1}x_{[i+j]_n}y_i.
		\end{equation*}
	\end{dfn}

	\begin{lemm}\label{lemm-1}\textup{(\cite{4})}
	Let $R$ be a commutative ring with a fixed non-trivial involutory automorphism $\hip{\phantom{a}}:R\to R$. Let $\vct{x},\vct{y}\in R^n$ and let $\lambda\in R:\lambda\hip{\lambda}=1$. Then $\Theta(\vct{x},\vct{y},j)[\lambda]=\lambda\Theta(\vct{y},\vct{x},n-j)[\hip{\lambda}]$, $\forall j\in[0\zint n-1]$.
	\end{lemm}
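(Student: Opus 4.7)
The plan is to prove the identity by expanding both sides directly from the definition of $\Theta$, resolving the modular indices $[i+j]_n$ into ordinary integer indices, and then reindexing one set of inner sums so that the two expressions match term by term. The conceptual picture is that the factor $\lambda$ in $\Theta(\vct{x},\vct{y},j)[\lambda]$ attaches exactly to the ``wrap-around'' contributions (those $i$ with $i+j\geq n$); swapping $\vct{x}\leftrightarrow\vct{y}$ and replacing $j$ by $n-j$ interchanges the wrap-around block with the non-wrap-around block, and multiplication by $\lambda$ together with the hypothesis $\lambda\hip{\lambda}=1$ then rebalances the $\lambda$ and $\hip{\lambda}$ factors to produce agreement.

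First I would rewrite the left-hand side without the bracket notation: since $[i+j]_n=i+j$ for $0\leq i\leq n-j-1$ and $[i+j]_n=i+j-n$ for $n-j\leq i\leq n-1$, this gives
\begin{equation*}
\Theta(\vct{x},\vct{y},j)[\lambda]=\sum_{i=0}^{n-j-1}x_{i+j}y_i+\lambda\sum_{i=n-j}^{n-1}x_{i+j-n}y_i.
\end{equation*}
Performing the same resolution on the right-hand side, with $\vct{x},\vct{y}$ swapped and $j$ replaced by $n-j$, yields
\begin{equation*}
\Theta(\vct{y},\vct{x},n-j)[\hip{\lambda}]=\sum_{i=0}^{j-1}y_{i+n-j}x_i+\hip{\lambda}\sum_{i=j}^{n-1}y_{i-j}x_i.
\end{equation*}
Multiplying through by $\lambda$ and invoking $\lambda\hip{\lambda}=1$ clears the $\hip{\lambda}$ from the second summand and places a $\lambda$ on the first. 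Finally I would reindex the left-hand expansion via the substitutions $k=i+j$ in its first sum and $k=i+j-n$ in its second; after using commutativity of $R$ to swap the roles of the $x$- and $y$-factors, the two sides coincide term by term, completing the proof for $j\in[1\zint n-1]$.

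The only genuine obstacle is the boundary case $j=0$, because then $\Theta(\vct{x},\vct{y},0)$ is governed by the separate $\lambda$-independent definition and the index $n-j=n$ sits just outside the declared range $\ZZ_n$. I would dispose of this case by applying the general formula to $\Theta(\vct{y},\vct{x},n)[\hip{\lambda}]$: the first sum is empty and the second simplifies to $\hip{\lambda}\sum_{i=0}^{n-1}y_ix_i$; multiplying by $\lambda$ and using $\lambda\hip{\lambda}=1$ together with commutativity of $R$ recovers $\vct{x}\vct{y}{}^T=\Theta(\vct{x},\vct{y},0)$, so the identity extends consistently to $j=0$ as claimed.
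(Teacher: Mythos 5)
Your proposal is correct and follows essentially the same route as the paper's proof: expand both sides from the definition, use $\lambda\hip{\lambda}=1$ to move the scalar onto the wrap-around block, and reindex so the sums match term by term (the paper keeps the modular bracket notation and phrases the reindexing as a circular shift, but the computation is identical). Your explicit treatment of the boundary case $j=0$, where $n-j=n$ falls outside the declared range and the $\lambda$-independent definition of $\Theta(\cdot,\cdot,0)$ must be reconciled with the general formula, is a point the paper passes over silently, and you resolve it correctly.
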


	\begin{proof}
	If $\vct{x}=(x_0,x_1,\ldots,x_{n-1})$, then $x_{[i+k]_n}=\tilde{x}_s$, where $\tilde{\vct{x}}$ is the vector $\vct{x}$ after being circularly shifted by $k$ places for some $k\in[0\zint n-1]$. If $\vct{y}=(y_0,y_1,\ldots,y_{n-1})$, then in a correspondence between the elements $x_i$ and $y_i$, inflicting a circular shift to both $\vct{x}$ and $\vct{y}$ by the same number of places preserves this correspondence. Thus, noting that $\lambda\hip{\lambda}=1$ by assumption, we have
	\begin{align*}
		\lambda\Theta(\vct{y},\vct{x},n-j)[\hip{\lambda}]&=\lambda\left(\sum_{i=0}^{n-(n-j)-1}y_{[i+(n-j)]_n}x_i+\hip{\lambda}\sum_{i=n-(n-j)}^{n-1}y_{[i+(n-j)]_n}x_i\right)\\
		&=\lambda\sum_{i=0}^{j-1}y_{[i+(n-j)]_n}x_i+\sum_{i=j}^{n-1}y_{[i+(n-j)]_n}x_i\\
		&=\sum_{i=j}^{n-1}x_{[i+j+(n-j)]_n}y_{[i+(n-j)]_n}+\lambda\sum_{i=0}^{j-1}x_{[i+j+(n-j)]_n}y_{[i+(n-j)]_n}\\
		&=\sum_{i=0}^{n-j-1}x_{[i+j]_n}y_i+\lambda\sum_{i=n-j}^{n-1}x_{[i+j]_n}y_i\\
		&=\Theta(\vct{x},\vct{y},j)[\lambda].
	\end{align*}
	\end{proof}

	\begin{remark}\label{remark-1}
	In Lemma \ref{lemm-1}, suppose we want to calculate $f(j)=\Theta(\vct{x},\hip{\vct{x}},j)[\hip{\lambda}]$, $\forall j\in[0\zint n-1]$. We have $f(j)=\lambda\Theta(\vct{x},\hip{\vct{x}},n-j)[\hip{\lambda}]$ which, since $\lambda\hip{\lambda}=1$, implies
		\begin{equation*}
		\hip{f(n-j)}=\Theta(\hip{\vct{x}},\vct{x},n-j)[\lambda]=\lambda\Theta(\vct{x},\hip{\vct{x}},j)[\hip{\lambda}]=\lambda f(j)
		\end{equation*}
	so that $f(j)=\lambda^{-1}\hip{f(n-j)}=\hip{\lambda f(n-j)}$. Therefore, to calculate $f(j)$ for $j\in[0\zint n-1]$, it is sufficient to determine $f(j)$ for $j\in[0\zint\floor{n/2}]$.

	Likewise, let $\vct{a}_i\in R^n$ for $i\in[0\zint k-1]$ and suppose we want to calculate $g(j,i,t)=\Theta(\vct{a}_{[i+j]_k},\hip{\vct{a}_i},t)[\hip{\lambda}]$, $\forall j\in[1\zint k-1]$ and $t\in[1\zint n-1]$. By Lemma \ref{lemm-1} we have 
		\begin{align*}
		g(j,i,t)&=\Theta(\vct{a}_{[i+j]_k},\hip{\vct{a}_i},t)[\hip{\lambda}]\\
		&=\hip{\lambda}\Theta(\hip{\vct{a}_i},\vct{a}_{[i+j]_k},n-t)[\lambda]\\
		&=\hip{\lambda\Theta(\vct{a}_i,\hip{\vct{a}_{[i+j]_k}},n-t)[\hip{\lambda}]}\\
		&=\hip{\lambda\Theta(\vct{a}_{[(i+j)+(k-j)]_k},\hip{\vct{a}_{[i+j]_k}},n-t)[\hip{\lambda}]}\\
		&=\hip{\lambda g([i+j]_k,k-j,n-t)}.
		\end{align*}
	
	Let $G_t$ be the matrix whose $(i,j)\nth$ entry is $g(j,i,t)$ for fixed $t\in[1\zint n-1]$. Let $T_1$ be the transformation which multiplies each entry of a matrix by $\lambda$ and let $T_2$ be the transformation which circularly shifts the $j\nth$ row of a matrix to the right by $j$ places, $\forall j$. Then we see that $G_t=\hip{T_2(J_{k-1}T_1(G_{n-t}))}$, $\forall t$, where $J_{k-1}$ is the $(k-1)\times(k-1)$ exchange matrix. Conversely, since $T_1$ and $T_2$ are clearly both invertible, we have $G_{n-t}=T_1^{-1}(J_{k-1}T_2^{-1}(\hip{G_t}))$. Therefore, to calculate $g(j,i,t)$ for $j\in[1\zint k-1]$ and $t\in[1\zint n-1]$, it is sufficient to determine $g(j,i,t)$ for $j\in[1\zint k-1]$ and $t\in[1\zint\floor{n/2}]$.
	 
	Finally, suppose we want to calculate $g(j,i,0)=\Theta(\vct{a}_{[i+j]_k},\hip{\vct{a}_i},0)$, $\forall j\in[1\zint k-1]$. Following a similar argument, we have $g(j,i,0)=\hip{g([i+j]_k,k-j,0)}$. Let $\vct{v}_j$ be the vector whose $i\nth$ entry is $g(j,i,0)$. Then we see that $\vct{v}_j$ corresponds to the vector $\hip{\vct{v}_{k-j}}$ after be circularly shifted to the left by $j$ places. Therefore, to calculate $g(j,i,0)$ for $j\in[1\zint k-1]$, it is sufficient to determine $g(j,i,0)$ for $j\in[1\zint\floor{k/2}]$.
	\end{remark}

	\begin{lemm}\label{lemm-2}\textup{(\cite{4})}
	Let $R$ be a commutative ring with a fixed non-trivial involutory automorphism $\hip{\phantom{a}}:R\to R$. Let $A=\pcir{\lambda}{\vct{a}}$ and $B=\pcir{\lambda}{\vct{b}}$ with $\vct{a},\vct{b}\in R^n$ and $\lambda\in R:\lambda\hip{\lambda}=1$. Then $B\hip{A}{}^T=\pcir{\lambda}{(v_0,v_1,\ldots,v_{n-1})}$, where $v_j=\Theta(\vct{b},\hip{\vct{a}},j)[\hip{\lambda}]$, $\forall j\in[0\zint n-1]$.
	\end{lemm}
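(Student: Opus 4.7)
The plan is to exploit the fact, established in the preliminaries, that a $\lambda$-circulant matrix is completely determined by its first row, and thereby reduce the statement to computing a single row. I would first argue that $B\hip{A}{}^T$ is $\lambda$-circulant: by the preliminaries, $\hip{A}{}^T$ is $\hip{\lambda}{}^{-1}$-circulant, and the hypothesis $\lambda\hip{\lambda}=1$ forces $\hip{\lambda}{}^{-1}=\lambda$, so $\hip{A}{}^T$ is $\lambda$-circulant. Since products of $\lambda$-circulant matrices are themselves $\lambda$-circulant, so is $B\hip{A}{}^T$. Hence $B\hip{A}{}^T=\pcir{\lambda}{(v_0,v_1,\ldots,v_{n-1})}$ for some $v_0,\ldots,v_{n-1}\in R$ forming its first row, and it remains to identify each $v_j$.

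Next I would compute $v_j$ directly as the inner product of the first row of $B$, namely $\vct{b}$, with the $(j+1)\nth$ column of $\hip{A}{}^T$, which is the $(j+1)\nth$ row of $\hip{A}$. Since $\hip{A}$ is $\hip{\lambda}$-circulant with generating vector $\hip{\vct{a}}$, reading off its $(j+1)\nth$ row from the standard form yields the entries $\hip{\lambda}\,\hip{a_{n-j+k}}$ at position $k\in[0\zint j-1]$ and $\hip{a_{k-j}}$ at position $k\in[j\zint n-1]$. Dotting these with $\vct{b}$ then gives
	\begin{equation*}
	v_j=\sum_{k=j}^{n-1}b_k\hip{a_{k-j}}+\hip{\lambda}\sum_{k=0}^{j-1}b_k\hip{a_{n-j+k}}.
	\end{equation*}

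Finally I would perform the index substitutions $i=k-j$ in the first sum (so that $i\in[0\zint n-j-1]$ and $[i+j]_n=k$) and $i=k+n-j$ in the second sum (so that $i\in[n-j\zint n-1]$ and $[i+j]_n=k$), which recast the expression as
	\begin{equation*}
	\sum_{i=0}^{n-j-1}b_{[i+j]_n}\hip{a_i}+\hip{\lambda}\sum_{i=n-j}^{n-1}b_{[i+j]_n}\hip{a_i}=\Theta(\vct{b},\hip{\vct{a}},j)[\hip{\lambda}],
	\end{equation*}
which is the claimed formula for $v_j$. I do not anticipate a genuine obstacle: the only point requiring attention is the bookkeeping of the ``wrap-around'' factor of $\hip{\lambda}$ when splitting the sum across positions $k<j$ and $k\geq j$, which is precisely the feature the mapping $\Theta$ was designed to encode.
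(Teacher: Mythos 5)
Your proposal is correct and follows essentially the same route as the paper: both establish that $B\hip{A}{}^T$ is $\lambda$-circulant from the preliminaries and then identify $v_j$ as the product of the first row of $B$ with the $(j+1)\nth$ row of $\hip{A}$, tracking the wrap-around factor $\hip{\lambda}$. Your explicit index substitutions merely make rigorous what the paper handles by the informal ``circular shift / extending this argument'' description, so there is nothing further to add.
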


	\begin{proof}
	Since $\lambda\hip{\lambda}=1$ by assumption, we know that $B\hip{A}{}^T$ is $\lambda$-circulant such that $B\hip{A}{}^T=\pcir{\lambda}{(\vct{x}_1\hip{\vct{y}_1}{}^T,\vct{x}_1\hip{\vct{y}_2}{}^T,\ldots,\vct{x}_1\hip{\vct{y}_n}{}^T)}$, where $\vct{x}_i$ and $\vct{y}_i$ denote the $i\nth$ rows of $B$ and $A$, respectively. Let $B\hip{A}{}^T=\pcir{\lambda}{(v_0,v_1,\ldots,v_{n-1})}$ so that $v_j=\vct{x}_1\hip{\vct{y}_{j+1}}{}^T$ for $j\in[0\zint n-1]$. It is easy to see that $v_0=\sum_{i=0}^{n-1}b_i\hip{a_i}$. In the product $v_1=\vct{x}_1\hip{\vct{y}_2}{}^T$, we see that the indices of the vector $\hip{\vct{y}_2}=(\hip{\lambda a_{n-1}},\hip{a_0},\hip{a_1},\ldots,\hip{a_{n-2}})$ correspond to the indices of the vector $\vct{x}_1=(b_0,b_1,b_2,\ldots,b_{n-1})$ after being circularly shifted to the right by 1 place. Thus, in $v_1=\vct{x}_1\hip{\vct{y}_2}{}^T$, there is a summation of terms in the form $b_{[i+1]_n}\hip{a_i}$ for $i\in[0\zint n-1]$. By extending this argument, we see that in the product $\vct{x}_1\hip{\vct{y}_{j+1}}{}^T$, there is a summation of terms in the form $b_{[i+j]_n}\hip{a_i}$ for $i\in[0\zint n-1]$ and $j\in[1\zint n-1]$. Also, in $v_1$, we see that the terms of the summation will acquire $\hip{\lambda}$ as a coefficient for $i=n-1$. By extending this argument, in $v_j$, we see that the terms of the summation will acquire $\hip{\lambda}$ as a coefficient for $i\in[n-j\zint n-1]$ and $j\in[1\zint n-1]$. In summary, we have
		\begin{equation*}
		v_j=\begin{cases}
		\sum_{i=0}^{n-1}b_i\hip{a_i},& j=0,\\
		\sum_{i=0}^{n-j-1}b_{[i+j]_n}\hip{a_i}+\hip{\lambda}\sum_{i=n-j}^{n-1}b_{[i+j]_n}\hip{a_i},& j\in[1\zint n-1]
		\end{cases}
		\end{equation*}	
	and so, in terms of $\Theta$, we see that $v_j=\Theta(\vct{b},\hip{\vct{a}},j)[\hip{\lambda}]$, $\forall j\in[0\zint n-1]$.
	\end{proof}

With these lemmas established, we will now look at how $\Theta$ can be used to prove that a matrix $G=(I_n,A)$ is a generator matrix of a Hermitian self-dual code.
	\begin{prop}\label{prop-3}\textup{(\cite{4})}
	Let $R$ be a commutative ring with a fixed non-trivial involutory automorphism $\hip{\phantom{a}}:R\to R$. Let $A=\pcir{\lambda}{\vct{a}}$ with $\vct{a}\in R^n$ and $\lambda\in R:\lambda\hip{\lambda}=1$. Then $A\hip{A}{}^T=-I_n$ if and only if
		\begin{equation*}
		\Theta(\vct{a},\hip{\vct{a}},j)[\hip{\lambda}]=\begin{cases}
		-1,& j=0,\\
		0,& j\in[1\zint\floor{n/2}].
		\end{cases}
		\end{equation*}	
	\end{prop}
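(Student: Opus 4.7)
The plan is to apply Lemma \ref{lemm-2} directly with $B=A$, then exploit the symmetry established in Remark \ref{remark-1} to cut the range of indices in half.

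First I would observe that Lemma \ref{lemm-2} with $B=A$ gives $A\hip{A}{}^T=\pcir{\lambda}{(v_0,v_1,\ldots,v_{n-1})}$ where $v_j=\Theta(\vct{a},\hip{\vct{a}},j)[\hip{\lambda}]$ for all $j\in[0\zint n-1]$. Since $-I_n=\pcir{\lambda}{(-1,0,0,\ldots,0)}$, and since a $\lambda$-circulant matrix is determined by its generating vector, the equality $A\hip{A}{}^T=-I_n$ is equivalent to $v_0=-1$ together with $v_j=0$ for all $j\in[1\zint n-1]$. This establishes the forward implication immediately and reduces the reverse implication to showing that vanishing on $[1\zint\floor{n/2}]$ forces vanishing on the full range $[1\zint n-1]$.

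For that reduction, I would invoke the identity from Remark \ref{remark-1} applied to $f(j)=\Theta(\vct{a},\hip{\vct{a}},j)[\hip{\lambda}]$, namely $f(j)=\hip{\lambda f(n-j)}$. Equivalently, $f(n-j)=\hip{\lambda}\,\hip{f(j)}$ (using $\lambda^{-1}=\hip{\lambda}$, which holds because $\lambda\hip{\lambda}=1$). Consequently, if $f(j)=0$ for every $j\in[1\zint\floor{n/2}]$, then $f(n-j)=0$ for every such $j$, which covers exactly the indices in $[\lceil n/2\rceil\zint n-1]$. Together with the assumed vanishing on $[1\zint\floor{n/2}]$, this yields $f(j)=0$ for all $j\in[1\zint n-1]$, as required.

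There is no real obstacle here, since both directions follow almost verbatim from the two preceding results; the only mild care needed is checking the index ranges in the symmetry argument (in particular verifying that $\floor{n/2}$ and $\lceil n/2\rceil$ together cover $[1\zint n-1]$ in both the even and odd cases, the even case being handled by the fact that $j=n/2$ appears in both halves and so is doubly accounted for but consistently zero). I would also briefly note the converse direction: if $A\hip{A}{}^T=-I_n$, then by Lemma \ref{lemm-2} all $v_j$ are determined, so the stated values of $\Theta$ for $j\in[0\zint\floor{n/2}]$ follow at once.
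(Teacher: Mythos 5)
Your proposal is correct and follows essentially the same route as the paper's own proof: apply Lemma \ref{lemm-2} with $B=A$ to identify the generating vector of $A\hip{A}{}^T$, then invoke the symmetry $f(j)=\hip{\lambda f(n-j)}$ from Remark \ref{remark-1} to reduce the verification to $j\in[0\zint\floor{n/2}]$. The only difference is that you spell out the index-range bookkeeping explicitly where the paper simply cites the remark.
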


	\begin{proof}
	Since $\lambda\hip{\lambda}=1$ by assumption, by Lemma \ref{lemm-2} we have $A\hip{A}{}^T=\pcir{\lambda}{(v_0,v_1,\ldots,v_{n-1})}$ where $v_j=\Theta(\vct{a},\hip{\vct{a}},j)[\hip{\lambda}]$, $\forall j\in[0\zint n-1]$. The main diagonal and off-diagonal entries of $A\hip{A}{}^T$ are given by $v_0$ and $v_j$, respectively, for $j\neq 0$, so $A\hip{A}{}^T=-I_n$ if and only if $v_0=-1$ and $v_j=0$, $\forall j\in[1\zint n-1]$. By Remark \ref{remark-1}, we see that it is sufficient to verify $v_0=-1$ and $v_j=0$, $\forall j\in[1\zint\floor{n/2}]$. Therefore, we see that $A\hip{A}{}^T=-I_n$ if and only if
		\begin{equation*}
		\Theta(\vct{a},\hip{\vct{a}},j)[\hip{\lambda}]=\begin{cases}
		-1,& j=0,\\
		0,& j\in[1\zint\floor{n/2}].
		\end{cases}
		\end{equation*}	
	\end{proof}

For example, if $R$ is a commutative Frobenius ring with a fixed non-trivial involutory automorphism $\hip{\phantom{a}}:R\to R$ such that $\lambda\in R:\lambda\hip{\lambda}=1$, then in the pure double circulant construction of Hermitian self-dual codes given by $G=(I_n,A)$ for an $n\times n$ $\lambda$-circulant matrix $A=\pcir{\lambda}{\vct{a}}$ over $R$, we know that $G$ is a generator matrix of a Hermitian self-dual $[2n,n]$-code over $R$ if and only if $A\hip{A}{}^T=-I_n$. In terms of $\Theta$, by Proposition \ref{prop-3} this is true if and only if
	\begin{equation*}
	\Theta(\vct{a},\hip{\vct{a}},j)[\hip{\lambda}]=\begin{cases}
	-1,& j=0,\\
	0,& j\in[1\zint\floor{n/2}].
	\end{cases}
	\end{equation*}	

\section{The Constructions}

In this section, we present the three techniques for constructing Hermitian self-dual codes, all of which are derived as modifications of previously known constructions of self-dual codes. We will hereafter always assume $R$ is a commutative Frobenius ring with a fixed non-trivial involutory automorphism $\hip{\phantom{a}}:R\to R$, which we refer to as the Hermitian involution.

\subsection{Construction 1}

The first technique we look at can be used to construct Hermitian self-dual $[4n,2n]$-codes over $R$. It can be interpreted as a Hermitian modification of the four circulant technique for constructing self-dual codes first introduced in \cite{5}, which uses a matrix $G$ defined by
	\begin{equation*}
	G=
	\begin{pmatrix}
	I_{2n} & X
	\end{pmatrix},\quad\text{where }
	X=
	\begin{pmatrix}
	A & B\\-B^T & A^T
	\end{pmatrix},
	\end{equation*}
and where $A$ and $B$ are circulant matrices. It also corresponds to the Hermitian analogue of the construction presented in \cite{17}.

	\begin{thm}\label{thm-1}
	Let
		\begin{equation*}
		G=\begin{pmatrix}
		I_{2n} & X
		\end{pmatrix},\quad\text{where }
		X=\begin{pmatrix}
		-A^TCJ & -\hip{B}\\
		B^TCJ & -\hip{A}
		\end{pmatrix}
		\end{equation*}	
	and where $J=J_n$, $A=\pcir{\lambda}{\vct{a}}$, $B=\pcir{\lambda}{\vct{b}}$ and $C=\pcir{\mu}{\vct{c}}$ with $\vct{a},\vct{b},\vct{c}\in R^n$ and $\lambda,\mu\in R:\lambda\hip{\lambda}=\mu\hip{\mu}=1$. Then $G$ is a generator matrix of a Hermitian self-dual $[4n,2n]$-code over $R$ if and only if
		\begin{empheq}[left=\empheqlbrace]{align*}
		\sum_{\vct{x}\in S}\Theta(\vct{x},\hip{\vct{x}},j)[\hip{\lambda}]&=\begin{cases}
		-1,& j=0,\\
		0,& j\in[1\zint\floor{n/2}],
		\end{cases}\\
		\Theta(\vct{c},\hip{\vct{c}},j)[\hip{\mu}]&=\begin{cases}
		1,& j=0,\\
		0,& j\in[1\zint\floor{n/2}],
		\end{cases}
		\end{empheq}		
	where $S=\{\vct{a},\vct{b}\}$.	
	\end{thm}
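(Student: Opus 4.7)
The matrix $G=(I_{2n}\mid X)$ has full row rank $2n$ because of its leading identity block, so it generates a $[4n,2n]$-code. That code is Hermitian self-dual if and only if it is Hermitian self-orthogonal, i.e.\ $G\hip{G}{}^T=0$. A direct block calculation gives $G\hip{G}{}^T=I_{2n}+X\hip{X}{}^T$, so the goal reduces to showing that $X\hip{X}{}^T=-I_{2n}$ is equivalent to the two $\Theta$-conditions in the statement.

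The main computation is to evaluate the $2\times 2$ block product $X\hip{X}{}^T$. Using $\hip{J}=J$, $\hip{\hip{Y}}=Y$, and $\hip{Y^T}=\hip{Y}{}^T$, I first write
	\begin{equation*}
	\hip{X}{}^T=\begin{pmatrix} -J\hip{C}{}^T\hip{A} & J\hip{C}{}^T\hip{B}\\ -B^T & -A^T\end{pmatrix},
	\end{equation*}
then multiply out the four blocks of $X\hip{X}{}^T$ and simplify each via $J^2=I_n$. The four resulting block entries each involve the factor $M:=C\hip{C}{}^T$ together with products of $A^T,\hip{A},B^T,\hip{B}$. Here the crucial algebraic input is that $\lambda\hip{\lambda}=1$ forces $\lambda^{-1}=\hip{\lambda}$, so each of $A^T,\hip{A},B^T,\hip{B}$ is $\hip{\lambda}$-circulant; because $\hip{\lambda}$-circulant matrices commute multiplicatively, I get $\hip{B}A^T=A^T\hip{B}$ and $\hip{B}B^T=B^T\hip{B}$ (and analogously for $A$). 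Substituting $M=I_n$ (which is exactly what the $C$-condition will give), the two off-diagonal blocks $-A^TM\hip{B}+\hip{B}A^T$ and $-B^TM\hip{A}+\hip{A}B^T$ collapse to $0$, and both diagonal blocks collapse to $A^T\hip{A}+B^T\hip{B}$. Since $(A\hip{A}{}^T+B\hip{B}{}^T)^T=A^T\hip{A}+B^T\hip{B}$ and $(-I_n)^T=-I_n$, the condition $X\hip{X}{}^T=-I_{2n}$ becomes the pair $M=I_n$ and $A\hip{A}{}^T+B\hip{B}{}^T=-I_n$.

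The final step is to translate these two matrix equations into $\Theta$-form via Lemma \ref{lemm-2}: $A\hip{A}{}^T$ and $B\hip{B}{}^T$ are $\lambda$-circulant with generating entries $\Theta(\vct{a},\hip{\vct{a}},j)[\hip{\lambda}]$ and $\Theta(\vct{b},\hip{\vct{b}},j)[\hip{\lambda}]$, and $C\hip{C}{}^T$ is $\mu$-circulant with generating entries $\Theta(\vct{c},\hip{\vct{c}},j)[\hip{\mu}]$. Matching generating vectors against $(-1,0,\ldots,0)$ and $(1,0,\ldots,0)$ respectively, and then applying the symmetry established in Remark \ref{remark-1} to cut the range of $j$ down to $[0\zint\floor{n/2}]$, yields exactly the two displayed $\Theta$-conditions. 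The main obstacle is the reverse direction: the off-diagonal block equations only give $A^T(M-I_n)\hip{B}=0$ and its mirror, which does not by itself force $M=I_n$ for arbitrary $A,B$. I would handle this by arguing that the proposition is an equivalence between the two bundled $\Theta$-conditions and the conjunction of the simplified matrix equations $M=I_n$ and $A\hip{A}{}^T+B\hip{B}{}^T=-I_n$, since each chain of simplifications above is reversible under the assumption that both ingredient equations hold, and each of Lemma \ref{lemm-2} and Remark \ref{remark-1} is itself an ``if and only if''.
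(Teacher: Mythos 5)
Your setup, the block computation of $X\hip{X}{}^T$, the reduction to the pair of matrix equations $C\hip{C}{}^T=I_n$ and $A\hip{A}{}^T+B\hip{B}{}^T=-I_n$, and the translation of that pair into the two $\Theta$-conditions via Lemma \ref{lemm-2} and Remark \ref{remark-1} all match the paper's proof, and your sufficiency direction is complete. The problem is precisely the one you flag at the end and then do not resolve. The theorem asserts an equivalence between $X\hip{X}{}^T=-I_{2n}$ and the $\Theta$-conditions, so you must show that $X\hip{X}{}^T=-I_{2n}$ \emph{forces} $C\hip{C}{}^T=I_n$. Your proposed fix --- that the $\Theta$-conditions are equivalent to the pair $(C\hip{C}{}^T=I_n,\ A\hip{A}{}^T+B\hip{B}{}^T=-I_n)$ and that the simplifications are reversible \emph{once both equations hold} --- only re-proves the ``if'' direction: it substitutes for the theorem the weaker claim that this particular pair of equations is equivalent to the $\Theta$-conditions, and leaves open the possibility that some $(A,B,C)$ with $C\hip{C}{}^T\neq I_n$ nevertheless satisfies $X\hip{X}{}^T=-I_{2n}$. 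Your own observation that the off-diagonal blocks yield only $A^T(C\hip{C}{}^T-I_n)\hip{B}=0$ shows exactly why this is not automatic, e.g.\ when $A$ and $B$ are zero divisors.

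The paper closes this gap by combining the diagonal and off-diagonal equations rather than treating them separately. Writing $M=C\hip{C}{}^T$, the vanishing of the $(1,2)$ block gives $\hip{B}A^T=A^TM\hip{B}$; right-multiplying the $(1,1)$ equation $A^TM\hip{A}+\hip{B}B^T=-I_n$ by $A^T$, rearranging with the commutativity of the $\hip{\lambda}$-circulant matrices $A^T,\hip{A},B^T,\hip{B}$, and substituting the off-diagonal relation yields $A^TM(\hip{A}A^T+\hip{B}B^T)=-A^T$, whence (after cancelling $A^T$, a step the paper performs) $M(\hip{A}A^T+\hip{B}B^T)=-I_n$. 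This third equation, taken together with the two diagonal equations $A^TM\hip{A}+\hip{B}B^T=-I_n$ and $B^TM\hip{B}+\hip{A}A^T=-I_n$, is what the paper uses to conclude that $M=I_n$ is forced; only then does everything collapse as in your sufficiency argument. To complete your proof you need an argument of this kind for the ``only if'' direction; as written, your proposal establishes only half of the stated equivalence.
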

	\begin{proof}
	We know that $G$ is a generator matrix of a Hermitian self-dual $[4n,2n]$-code over $R$ if and only if $X\hip{X}{}^T=-I_{2n}$. Since $\lambda\hip{\lambda}=1$ by assumption, we have that $A$ and $B$ as well as their Hermitian involutions and transpositions all commute with one another multiplicatively. Firstly, since $J$ is symmetric we have
		\begin{equation*}
		\hip{X}{}^T=\begin{pmatrix}
		-\hip{A}{}^T\hip{C}J & -B\\
		\hip{B}{}^T\hip{C}J & -A
		\end{pmatrix}^T=
		\begin{pmatrix}
		-J\hip{C}{}^T\hip{A} & J\hip{C}{}^T\hip{B}\\
		-B^T & -A^T
		\end{pmatrix}.
		\end{equation*}	
		
	If the $(i,j)\nth$ block-wise entry of $X\hip{X}{}^T$ is $x_{i,j}$, noting that $J$ is involutory we see that
		\begin{align*}
		x_{1,1}&=A^TCJ^2\hip{C}{}^T\hip{A}+\hip{B}B^T=A^TC\hip{C}{}^T\hip{A}+\hip{B}B^T,\\
		x_{1,2}&=-A^TCJ^2\hip{C}{}^T\hip{B}+\hip{B}A^T=-A^TC\hip{C}{}^T\hip{B}+\hip{B}A^T,\\
		x_{2,1}&=-B^TCJ^2\hip{C}{}^T\hip{A}+\hip{A}B^T=-B^TC\hip{C}{}^T\hip{A}+\hip{A}B^T,\\
		x_{2,2}&=B^TCJ^2\hip{C}{}^T\hip{B}{}^T+\hip{A}A^T=B^TC\hip{C}{}^T\hip{B}{}^T+\hip{A}A^T.
		\end{align*}	
		
	Noting that $X\hip{X}{}^T=-I_{2n}$ if and only if $A^TC\hip{C}{}^T\hip{B}=\hip{B}A^T$, we see that
		\begin{align*}
		X\hip{X}{}^T=-I_{2n}&\iff A^TC\hip{C}{}^T\hip{A}+\hip{B}B^T=-I_n\\&\iff A^TC\hip{C}{}^T\hip{A}A^T+(\hip{B}A^T)B^T=-A^T\\&\iff A^TC\hip{C}{}^T\hip{A}A^T+(A^TC\hip{C}{}^T\hip{B})B^T=-A^T\\&\iff A^TC\hip{C}{}^T(\hip{A}A^T+\hip{B}B^T)=-A^T\\&\iff
		C\hip{C}{}^T(\hip{A}A^T+\hip{B}B^T)=-I_n
		\end{align*}
	and so, combined with our other required conditions, we have that $X\hip{X}{}^T=-I_{2n}$ if and only if
		\begin{empheq}[left=\empheqlbrace]{align*}
		A^TC\hip{C}{}^T\hip{A}+\hip{B}B^T&=-I_n,\\
		B^TC\hip{C}{}^T\hip{B}+\hip{A}A^T&=-I_n,\\
		C\hip{C}{}^T(\hip{A}A^T+\hip{B}B^T)&=-I_n.
		\end{empheq}	
		
	Clearly, we must have $C\hip{C}{}^T=I_n$ for all of these equations to be satisfied. With this prerequisite, our conditions reduce to
		\begin{align*}
		x_{1,1}&=\hip{A}A^T+\hip{B}B^T,\\
		x_{1,2}&=\vct{0},\\
		x_{2,1}&=\vct{0},\\
		x_{2,2}&=\hip{A}A^T+\hip{B}B^T,
		\end{align*}
	or equivalently
		\begin{align*}
		x_{1,1}&=A\hip{A}{}^T+B\hip{B}{}^T,\\
		x_{1,2}&=\vct{0},\\
		x_{2,1}&=\vct{0},\\
		x_{2,2}&=A\hip{A}{}^T+B\hip{B}{}^T,
		\end{align*}
		
	Therefore, $X\hip{X}{}^T=-I_{2n}$ if and only if $A\hip{A}{}^T+B\hip{B}{}^T=-I_n$ and $C\hip{C}{}^T=I_n$ and by Proposition \ref{prop-3} this is true if and only if
		\begin{empheq}[left=\empheqlbrace]{align*}
		\sum_{\vct{x}\in S}\Theta(\vct{x},\hip{\vct{x}},j)[\hip{\lambda}]&=\begin{cases}
		-1,& j=0,\\
		0,& j\in[1\zint\floor{n/2}],
		\end{cases}\\
		\Theta(\vct{c},\hip{\vct{c}},j)[\hip{\mu}]&=\begin{cases}
		1,& j=0,\\
		0,& j\in[1\zint\floor{n/2}],
		\end{cases}
		\end{empheq}		
	where $S=\{\vct{a},\vct{b}\}$.
	\end{proof}
	
	\begin{remark}
		Let $U'$ denote the set of unitary elements in $R$, i.e. $U'=\{\lambda\in R:\lambda\hip{\lambda}=1\}$. Let $N_{C}=N_C(R,n)$ denote the number of unitary $\mu$-circulant matrices $C$ (i.e. $C\hip{C}{}^T=I_n$) over $R$ for all $\mu\in U'$. The search field for Hermitian self-dual $[4n,2n]$-codes over $R$ constructed by Theorem \ref{thm-1} is of size $|R|^{2n}\cdot |U'|\cdot N_C$. In general, $N_C$ is relatively small, for example $N_C(\FF_4,10)=4,320$ and $N_C(\FF_4+u\FF_4,5)=8,640$.
	\end{remark}
	\begin{remark}\label{remark-2}
	In Theorem \ref{thm-1}, we are in fact able to assume $C$ is any matrix over $R$ such that $C$ is unitary. Moreover, $C$ and $\hip{C}{}^T$ need not commute multiplicatively with either $A$, $B$, their Hermitian involutions or their transpositions.
	\end{remark}
	
\subsection{Construction 2}
The second technique can be used to construct Hermitian self-dual $[2kn,kn]$-codes over $R$ where $k\in\NN$. It is derived as the Hermitian analogue of the technique for constructing self-dual codes first given in \cite{6}, which uses a matrix $G$ defined by
	\begin{equation*}
		G=\begin{pmatrix}
		I_{kn} & X
		\end{pmatrix},\quad\text{where }
		X=\cir{(A_0,A_1,\ldots,A_{k-1})},
	\end{equation*}
and where $A_i$ are circulant matrices for $i\in[0\zint k-1]$. 

	\begin{thm}\label{thm-2}
	Let
		\begin{equation*}
		G=\begin{pmatrix}
		I_{kn} & X
		\end{pmatrix},\quad\text{where }
		X=\pcir{\lambda}{(A_0,A_1,\ldots,A_{k-1})}
		\end{equation*}	
	and where $A_i=\pcir{\mu}{\vct{a}_i}$ with $\vct{a}_i\in R^n$ for $i\in[0\zint k-1]$ and $\lambda,\mu\in R:\lambda\hip{\lambda}=\mu\hip{\mu}=1$. Then $G$ is a generator matrix of a Hermitian self-dual $[2kn,kn]$-code over $R$ if and only if
		\begin{empheq}[left=\empheqlbrace]{align*}
		\sum_{i=0}^{k-1}\Theta(\vct{a}_i,\hip{\vct{a}_i},t)[\hip{\mu}]&=\begin{cases}
		-1,&t=0\\
		0,&t\in[1\zint\floor{n/2}],
		\end{cases}\\
		\sum_{i=0}^{k-j-1}\Theta(\vct{a}_{[i+j]_k},\hip{\vct{a}_i},0)+\hip{\lambda}\sum_{i=k-j}^{k-1}\Theta(\vct{a}_{[i+j]_k},\hip{\vct{a}_i},0)&=0,\quad j\in[1\zint\floor{k/2}],\\
		\sum_{i=0}^{k-j-1}\Theta(\vct{a}_{[i+j]_k},\hip{\vct{a}_i},t)[\hip{\mu}]+\hip{\lambda}\sum_{i=k-j}^{k-1}\Theta(\vct{a}_{[i+j]_k},\hip{\vct{a}_i},t)[\hip{\mu}]&=0,\quad\begin{aligned}
		&j\in[1\zint k-1],\\
		&t\in[1\zint\floor{n/2}].
		\end{aligned}
		\end{empheq}		
	\end{thm}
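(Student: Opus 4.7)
The plan is to reduce Hermitian self-duality to the matrix identity $X\hip{X}{}^T=-I_{kn}$, and then unpack this identity using the two nested layers of circulant structure in $X$. Since $G=(I_{kn}\mid X)$ is in standard form, it generates a Hermitian self-dual $[2kn,kn]$-code over $R$ if and only if $G\hip{G}{}^T=\vct{0}$, equivalently $X\hip{X}{}^T=-I_{kn}$.

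To handle the right-hand side, I would first establish a block analogue of Lemma \ref{lemm-2}. The derivation in Lemma \ref{lemm-2} uses only the identity $P_\lambda^n=\lambda I_n$ together with the fact that $\lambda$-circulants commute multiplicatively. Both carry over to the outer block structure of $X$ with $\mu$-circulant blocks: the relevant $k\times k$ block analogue of $P_\lambda$ satisfies the corresponding identity, and the blocks $A_i$ and $\hip{A_i}{}^T$ all live in the commutative algebra of $\mu$-circulants (noting that $\hip{A_i}{}^T$ is $\hip{\mu}{}^{-1}=\mu$-circulant because $\mu\hip{\mu}=1$). The upshot is that
	\begin{equation*}
	X\hip{X}{}^T=\pcir{\lambda}{(V_0,V_1,\ldots,V_{k-1})}
	\end{equation*}
with $V_0=\sum_{i=0}^{k-1}A_i\hip{A_i}{}^T$ and, for $j\in[1\zint k-1]$,
	\begin{equation*}
	V_j=\sum_{i=0}^{k-j-1}A_{[i+j]_k}\hip{A_i}{}^T+\hip{\lambda}\sum_{i=k-j}^{k-1}A_{[i+j]_k}\hip{A_i}{}^T.
	\end{equation*}

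Next, Lemma \ref{lemm-2} applied to each individual product $A_{[i+j]_k}\hip{A_i}{}^T$ identifies its $t$-th generating coefficient as $\Theta(\vct{a}_{[i+j]_k},\hip{\vct{a}_i},t)[\hip{\mu}]$, so by linearity the $\mu$-circulant matrix $V_j$ has $t$-th generating coefficient equal to the sum on the left-hand side of the relevant displayed equation in the theorem statement. I would then translate $X\hip{X}{}^T=-I_{kn}$ into the pair of conditions $V_0=-I_n$ and $V_j=\vct{0}$ for $j\in[1\zint k-1]$. Proposition \ref{prop-3} applied to $V_0$ yields the first display. The condition that every generating coefficient of every $V_j$ vanishes yields the second and third displays at $t=0$ and $t\in[1\zint n-1]$ respectively. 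Finally, the range restrictions $j\in[1\zint\floor{k/2}]$ (for $t=0$) and $t\in[1\zint\floor{n/2}]$ (in general) follow from Remark \ref{remark-1}: the pointwise identities $g(j,i,0)=\hip{g([i+j]_k,k-j,0)}$ and $g(j,i,t)=\hip{\lambda g([i+j]_k,k-j,n-t)}$ transfer via the change of summation index $i'=[i+j]_k$ to the aggregate coefficient sums, so that the condition at $(j,t)$ becomes equivalent to the one at $(k-j,n-t)$.

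The main obstacle will be the careful bookkeeping to set up the block version of Lemma \ref{lemm-2} — in particular, verifying that the scalar $\hip{\lambda}$ attaches to exactly the tail summands of $V_j$, and that each summand really is of the form $A_{[i+j]_k}\hip{A_i}{}^T$ (with the correct shift pattern inherited from the scalar case) — and then confirming that the aggregate sums truly inherit the pointwise symmetry from Remark \ref{remark-1}, so that halving the range of $(j,t)$ loses no information. Once these two structural ingredients are pinned down, the remainder of the argument is a routine two-level index calculation.
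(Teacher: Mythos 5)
Your proposal follows the paper's own proof essentially step for step: reduce to $X\hip{X}{}^T=-I_{kn}$, compute the block-row products $\vct{R}_1\hip{\vct{R}_j}{}^T$ by the block analogue of Lemma \ref{lemm-2}, apply Proposition \ref{prop-3} to the diagonal block and Lemma \ref{lemm-2} to each off-diagonal summand, and invoke Remark \ref{remark-1} to halve the ranges of $j$ and $t$. The two structural checks you flag (the placement of $\hip{\lambda}$ on the tail summands and the transfer of the pointwise symmetry to the aggregate sums) are exactly the points the paper treats, so the argument is correct and not a different route.
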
	

	\begin{proof}
	We know that $G$ is a generator matrix of a Hermitian self-dual $[2kn,kn]$-code over $R$ if and only if $X\hip{X}{}^T=-I_{kn}$, namely the main diagonal and off-diagonal block-wise entries of $X\hip{X}{}^T$ are equal to $-I_n$ and $\vct{0}$, respectively. Since $\lambda\hip{\lambda}=1$ by assumption, we have that $X\hip{X}{}^T$ is $\lambda$-circulant such that $X\hip{X}{}^T=\pcir{\lambda}{(\vct{R}_1\hip{\vct{R}_1}{}^T,\vct{R}_1\hip{\vct{R}_2}{}^T,\ldots,\vct{R}_1\hip{\vct{R}_n}{}^T)}$, where $\vct{R}_i$ denotes the $i\nth$ block row of $X$. Following an argument similar to that used in the proof of Lemma \ref{lemm-2}, we observe that
		\begin{equation*}
			\vct{R}_1\hip{\vct{R}_j}{}^T=\begin{cases}
			\sum_{i=0}^{k-1}A_i\hip{A_i}{}^T,&j=1,\\
			\sum_{i=0}^{k-1}A_{[i+(j-1)]_k}\hip{A_i}{}^T+\hip{\lambda}\sum_{i=k-j+1}^{k-1}A_{[i+(j-1)]_k}\hip{A_i}{}^T,&j\in[2\zint k].
			\end{cases}\geteq\label{eqn-1}
		\end{equation*}
	
	The main diagonal and off-diagonal block-wise entries of $X\hip{X}{}^T$ are equal to $\vct{R}_1\hip{\vct{R}_1}{}^T$ and $\vct{R}_1\hip{\vct{R}_j}{}^T$ for $j\in[2\zint k]$, respectively. Thus, by \eqref{eqn-1} we see that $X\hip{X}{}^T=-I_{kn}$ if and only if
		\begin{equation*}
			\sum_{i=0}^{k-1}A_i\hip{A_i}{}^T=-I_n\geteq\label{eqn-2}
		\end{equation*}
	and
		\begin{equation*}
			\sum_{i=0}^{k-j-1}A_{[i+j]_k}\hip{A_i}{}^T+\hip{\lambda}\sum_{i=k-j}^{k-1}A_{[i+j]_k}\hip{A_i}{}^T=\vct{0},\geteq\label{eqn-3}
		\end{equation*}
	$\forall j\in[1\zint k-1]$. Since $\mu\hip{\mu}=1$, by Proposition \ref{prop-3} we see that \eqref{eqn-2} is satisfied if and only if
		\begin{equation*}
			\sum_{i=0}^{k-1}\Theta(\vct{a}_i,\hip{\vct{a}_i},t)[\hip{\mu}]=\begin{cases}
			-1,&t=0,\\
			0,&t\in[1\zint\floor{n/2}].
			\end{cases}
		\end{equation*}
	
	By Lemma \ref{lemm-2}, we see that $A_{[i+j]_k}\hip{A_i}{}^T=\pcir{\lambda}{(v_{j,0},v_{j,1},\ldots,v_{j,n-1})}$ where $v_{j,t}=\Theta(\vct{a}_{[i+j]_k},\hip{\vct{a}_i},t)[\hip{\mu}]$, $\forall j\in[1\zint k-1]$ and $t\in[0\zint n-1]$. By Remark \ref{remark-1}, we know that it is sufficient to determine $v_{j,t}$, $\forall j\in[1\zint k-1]$ and $t\in[1\zint\floor{n/2}]$ and $v_{j,0}$ for $j\in[1\zint\floor{k/2}]$. Therefore, we find that \eqref{eqn-3} is satisfied if and only if
		\begin{equation*}
		\sum_{i=0}^{k-j-1}\Theta(\vct{a}_{[i+j]_k},\hip{\vct{a}_i},0)+\hip{\lambda}\sum_{i=k-j}^{k-1}\Theta(\vct{a}_{[i+j]_k},\hip{\vct{a}_i},0)=0,
		\end{equation*}	
	$\forall j\in[1\zint\floor{k/2}$ and
		\begin{equation*}
		\sum_{i=0}^{k-j-1}\Theta(\vct{a}_{[i+j]_k},\hip{\vct{a}_i},t)[\hip{\mu}]+\hip{\lambda}\sum_{i=k-j}^{k-1}\Theta(\vct{a}_{[i+j]_k},\hip{\vct{a}_i},t)[\hip{\mu}]=0,
		\end{equation*}	
	$\forall j\in[1\zint k-1]$ and $t\in[1\zint\floor{n/2}]$.		
	\end{proof}
	\begin{remark}
		Let $U'$ denote the set of unitary elements in $R$, i.e. $U'=\{\lambda\in R:\lambda\hip{\lambda}=1\}$. The search field for Hermitian self-dual $[2kn,kn]$-codes over $R$ constructed by Theorem \ref{thm-2} is of size $|R|^{kn}\cdot |U'|^2$.
	\end{remark}	
	
\subsection{Construction 3} 

The third technique can be used to construct Hermitian self-dual $[2(kn+1),kn+1]$-codes over $R$ where $k\in\NN$. It is derived as the Hermitian analogue of the technique for constructing self-dual codes first given in \cite{6}, which uses a matrix $G$ defined by
	\begin{equation*}
		G=\begin{pmatrix}
		I_{kn+1} & X
		\end{pmatrix},\quad\text{where }
		X=\begin{pmatrix}
		x_1 & X_2\\
		X_3^T & Y
		\end{pmatrix},\quad\text{and }
		Y=\cir{(A_0,A_1,\ldots,A_{k-1})},
	\end{equation*}
and where $A_i$ are circulant matrices for $i\in[0\zint k-1]$ with $X_2=(x_2,x_2,\ldots,x_2)$ and $X_3=(x_3,x_3,\ldots,x_3)$ for elements $x_1,x_2,x_3\in R$.

	\begin{thm}\label{thm-3}
	Let
		\begin{equation*}
		G=\begin{pmatrix}
		I_{kn+1} & X
		\end{pmatrix},\quad\text{where }
		X=\begin{pmatrix}
		x_1 & X_2\\
		X_3^T & Y
		\end{pmatrix},\quad\text{and }
		Y=\cir{(A_0,A_1,\ldots,A_{k-1})},
		\end{equation*}	
	and where $x_1\in R$, $A_i=\cir{\vct{a}_i}=\cir{(a_{i:0},a_{i:1},\ldots,a_{i:n-1})}\in R^n$ for $i\in[0\zint k-1]$ and $X_2=(\vct{x}_2,\vct{x}_2,\ldots,\vct{x}_2),X_3=(\vct{x}_3,\vct{x}_3,\ldots,\vct{x}_3)\in R^k$ such that $\vct{x}_2=(x_2,x_2,\ldots,x_2),\vct{x}_3=(x_3,x_3,\ldots,x_3)\in R^n$. Then $G$ is a generator matrix of a Hermitian self-dual $[2(kn+1),kn+1]$-code over $R$ if and only if
		\begin{empheq}[left=\empheqlbrace]{align*}
		1+x_1\hip{x_1}+knx_2\hip{x_2}&=0,\\
		x_1\hip{x_3}+x_2\sum_{i=0}^{k-1}\sum_{s=0}^{n-1}\hip{a_{i:s}}&=0,\\
		\sum_{i=0}^{k-1}\Theta(\vct{a}_i,\hip{\vct{a}_i},t)&=\begin{cases}
		-1-x_3\hip{x_3},&t=0,\\
		-x_3\hip{x_3},&t\in[1\zint\floor{n/2}],
		\end{cases}\\
		\sum_{i=0}^{k-1}\Theta(\vct{a}_{[i+j]_k},\hip{\vct{a}_i},0)&=-x_3\hip{x_3},\quad j\in[1\zint\floor{k/2},\\
		\sum_{i=0}^{k-1}\Theta(\vct{a}_{[i+j]_k},\hip{\vct{a}_i},t)&=-x_3\hip{x_3},\quad\begin{aligned}
		&j\in[1\zint k-1],\\
		&t\in[1\zint\floor{n/2}].
		\end{aligned}
		\end{empheq}		
	\end{thm}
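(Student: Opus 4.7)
The plan is to exploit that $G=(I_{kn+1}\mid X)$ generates a Hermitian self-dual code iff $X\hip{X}{}^T=-I_{kn+1}$, and then analyze this matrix equation block-wise under the natural $(1,kn)$-partition induced by $X$. The four blocks of $X\hip{X}{}^T$ are the scalar $x_1\hip{x_1}+X_2\hip{X_2}{}^T$, the row $x_1\hip{X_3}+X_2\hip{Y}{}^T$, its Hermitian conjugate transpose (since $X\hip{X}{}^T$ is Hermitian, i.e.\ invariant under $M\mapsto\hip{M{}^T}$), and the matrix $X_3^T\hip{X_3}+Y\hip{Y}{}^T$. These correspond, respectively, to the first condition of the theorem, the second condition, an equation that holds automatically once the second does, and the remaining three conditions.

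The first two are routine. For the $(1,1)$ scalar block, $X_2\hip{X_2}{}^T=knx_2\hip{x_2}$ since $X_2$ is the constant vector of length $kn$ with every entry equal to $x_2$; setting the block equal to $-1$ yields $1+x_1\hip{x_1}+knx_2\hip{x_2}=0$. For the $(1,2)$ block, the key observation is that every scalar row of $Y$ has the same sum $\sigma=\sum_{i=0}^{k-1}\sum_{s=0}^{n-1}a_{i:s}$: indeed $Y$ is block-circulant, so every block-row is a cyclic permutation of $(A_0,\ldots,A_{k-1})$, and every row of the circulant $A_i$ sums to $\sum_sa_{i:s}$. Hence every entry of $X_2\hip{Y}{}^T$ equals $x_2\hip{\sigma}$, and equating each entry of $x_1\hip{X_3}+X_2\hip{Y}{}^T$ with $0$ yields the second condition.

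For the $(2,2)$ block, I view $X_3^T\hip{X_3}$ as the $k\times k$ block matrix each of whose $n\times n$ blocks equals $x_3\hip{x_3}$ times the $n\times n$ all-ones matrix (which is itself circulant). Since $1\cdot\hip{1}=1$, the block argument used in the proof of Theorem \ref{thm-2} shows $Y\hip{Y}{}^T$ is block circulant of order $k$ with diagonal block $\sum_iA_i\hip{A_i}{}^T$ and $j$-th off-diagonal block $\sum_iA_{[i+j]_k}\hip{A_i}{}^T$; Lemma \ref{lemm-2} then expresses the generating vector of each of these circulant $n\times n$ sub-blocks in terms of $\Theta$. Equating $X_3^T\hip{X_3}+Y\hip{Y}{}^T$ with $-I_{kn}$ block-wise and reading off generating vectors produces $\sum_i\Theta(\vct{a}_i,\hip{\vct{a}_i},t)$-equations for $t\in[0\zint n-1]$ and $\sum_i\Theta(\vct{a}_{[i+j]_k},\hip{\vct{a}_i},t)$-equations for all $(j,t)\in[1\zint k-1]\times[0\zint n-1]$, with constant right-hand sides $-1-x_3\hip{x_3}$ and $-x_3\hip{x_3}$ in the stated pattern.

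The last step collapses these quantifier ranges to those appearing in the statement via Remark \ref{remark-1}. Since $R$ is commutative and $\hip{\phantom{a}}$ is an involutory automorphism, $\hip{x_3\hip{x_3}}=x_3\hip{x_3}$, so the constant right-hand side $-x_3\hip{x_3}$ is fixed under the Hermitian-involution-induced symmetries of the $\Theta$-values described in the remark. This compatibility lets the diagonal $t$-range collapse from $[0\zint n-1]$ to $[0\zint\floor{n/2}]$, the off-diagonal $t=0$ block-index range collapse from $j\in[1\zint k-1]$ to $j\in[1\zint\floor{k/2}]$, and the remaining off-diagonal range collapse from $(j,t)\in[1\zint k-1]\times[0\zint n-1]$ to $(j,t)\in[1\zint k-1]\times[1\zint\floor{n/2}]$. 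I expect this last compatibility check --- verifying that the constant RHS is symmetric under the relevant involution so the range reductions in Remark \ref{remark-1} actually apply --- to be the main (but modest) subtlety; everything else is routine block-matrix algebra combined with Lemmas \ref{lemm-1} and \ref{lemm-2}.
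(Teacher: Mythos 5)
Your proposal is correct and follows essentially the same route as the paper: the same $(1,kn)$ block decomposition of $X\hip{X}{}^T$, the same computation of the scalar and row blocks, and the same reduction of the $(2,2)$ block to the argument of Theorem \ref{thm-2} together with the range collapse of Remark \ref{remark-1}. Your explicit check that $\hip{-x_3\hip{x_3}}=-x_3\hip{x_3}$, so that the nonzero right-hand sides are compatible with the involution symmetry underlying the range reduction, is a point the paper leaves implicit, and it is a worthwhile addition.
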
	
	
	\begin{proof}
		We know that $G$ is a generator matrix of a Hermitian self-dual $[2(kn+1),kn+1]$-code over $R$ if and only if $X\hip{X}{}^T=-I_{kn+1}$. We have
			\begin{align*}
				X\hip{X}{}^T&=
				\begin{pmatrix}
				x_1 & X_2\\
				X_3^T & Y
				\end{pmatrix}
				\begin{pmatrix}
				\hip{x_1} & \hip{X_2}\\
				\hip{X_3}{}^T & \hip{Y}
				\end{pmatrix}^T\\&=
				\begin{pmatrix}
				x_1 & X_2\\
				X_3^T & Y
				\end{pmatrix}
				\begin{pmatrix}
				\hip{x_1} & \hip{X_3}\\
				\hip{X_2}{}^T & \hip{Y}{}^T
				\end{pmatrix}\\&=	
				\begin{pmatrix}
				x_1\hip{x_1}+X_2\hip{X_2}{}^T & x_1\hip{X_3}+X_2\hip{Y}{}^T\\
				\hip{x_1}X_3^T+Y\hip{X_2}{}^T & X_3^T\hip{X_3}+Y\hip{Y}{}^T
				\end{pmatrix}\\&=
				\begin{pmatrix}
				Z_1 & Z_2\\
				Z_2^T & Z_3
				\end{pmatrix}
			\end{align*}
		so that $X\hip{X}{}^T=-I_{kn+1}$ if and only if $Z_1=-1$, $Z_2=\vct{0}$ and $Z_3=-I_{kn}$. It is easy to see that $Z_1=x_1\hip{x_1}+X_2\hip{X_2}{}^T=x_1\hip{x_1}+knx_2\hip{x_2}$, so we must have
			\begin{equation*}
			1+x_1\hip{x_1}+knx_2\hip{x_2}=0.
			\end{equation*}
		
		Let $X_2\hip{Y}{}^T=(\vct{u}_1,\vct{u}_2,\ldots,\vct{u}_k)$, where $\vct{u}_j=(u_{j:1},u_{j:2},\ldots,u_{j:n})\in R^n$, $\forall j\in[1\zint k]$. Then
			\begin{align*}
			Z_2&=x_1\hip{X_3}+X_2\hip{Y}{}^T\\
			&=x_1
			\begin{pmatrix}
			\hip{\vct{x}_3} & \hip{\vct{x}_3} & \cdots & \hip{\vct{x}_3} 
			\end{pmatrix}+
			\begin{pmatrix}
			\vct{u}_1 & \vct{u}_2 & \cdots & \vct{u}_3
			\end{pmatrix}\\&=
			\begin{pmatrix}
			x_1\hip{\vct{x}_3}+\vct{u}_1 & x_1\hip{\vct{x}_3}+\vct{u}_2 & \cdots & x_1\hip{\vct{x}_3}+\vct{u}_k
			\end{pmatrix}\\&=
			\begin{pmatrix}
			x_1\hip{\vct{x}_3}+u_{1:1} & x_1\hip{\vct{x}_3}+u_{1:2} & \cdots & x_1\hip{\vct{x}_3}+u_{k:n}
			\end{pmatrix}
			\end{align*}
		and so we see that $Z_2=\vct{0}$ if and only if $x_1\hip{\vct{x}_3}+u_{j:t}=0$, $\forall j\in[1\zint k]$ and $t\in[1\zint n]$. Since $Y=\cir{(A_0,A_1,\ldots,A_{k-1})}$ and $A_i=\cir{(a_{i:0},a_{i:1},\ldots,a_{i:n-1})}$, we observe that $\vct{u}_j=\vct{x}_2\sum_{i=0}^{k-1}\hip{A_i}{}^T$ so that $u_{j:t}=x_2\sum_{i=0}^{k-1}\sum_{s=0}^{n-1}\hip{a_{i:s}}$, $\forall j\in[1\zint k]$ and $t\in[1\zint n]$. Thus, we find that $Z_2=\vct{0}$ if and only if
			\begin{equation*}
			x_1\hip{x_3}+x_2\sum_{i=0}^{k-1}\sum_{s=0}^{n-1}\hip{a_{i:s}}=0.
			\end{equation*}	
		
		Finally, since
			\begin{equation*}
			X_3^T\hip{X_3}=
			\begin{pmatrix}
			x_3\hip{x_3} & x_3\hip{x_3} & \cdots & x_3\hip{x_3}\\
			x_3\hip{x_3} & x_3\hip{x_3} & \cdots & x_3\hip{x_3}\\
			\vdots & \vdots & \ddots & \vdots\\
			x_3\hip{x_3} & x_3\hip{x_3} & \cdots & x_3\hip{x_3}
			\end{pmatrix}
			\end{equation*}
		we have that $Z_3=-I_{kn}$ if and only if 
			\begin{equation*}
			Y\hip{Y}{}^T=
			\begin{pmatrix}
			-1-x_3\hip{x_3} & -x_3\hip{x_3} & \cdots & -x_3\hip{x_3}\\
			-x_3\hip{x_3} & -1-x_3\hip{x_3} & \cdots & -x_3\hip{x_3}\\
			\vdots & \vdots & \ddots & \vdots\\
			-x_3\hip{x_3} & -x_3\hip{x_3} & \cdots & -1-x_3\hip{x_3}
			\end{pmatrix}.\geteq\label{eqn-4}
			\end{equation*}
		
		By following the proof of Theorem \ref{thm-2}, we find that \eqref{eqn-4} is satisfied if and only if
			\begin{empheq}[left=\empheqlbrace]{align*}
			\sum_{i=0}^{k-1}\Theta(\vct{a}_i,\hip{\vct{a}_i},t)&=\begin{cases}
			-1-x_3\hip{x_3},&t=0,\\
			-x_3\hip{x_3},&t\in[1\zint\floor{n/2}],
			\end{cases}\\
			\sum_{i=0}^{k-1}\Theta(\vct{a}_{[i+j]_k},\hip{\vct{a}_i},0)&=-x_3\hip{x_3},\quad j\in[1\zint\floor{k/2},\\
			\sum_{i=0}^{k-1}\Theta(\vct{a}_{[i+j]_k},\hip{\vct{a}_i},t)&=-x_3\hip{x_3},\quad\begin{aligned}
			&j\in[1\zint k-1],\\
			&t\in[1\zint\floor{n/2}].
			\end{aligned}
			\end{empheq}				
	\end{proof}
	\begin{remark}
		The search field for Hermitian self-dual $[2(kn+1),kn+1]$-codes over $R$ constructed by Theorem \ref{thm-3} is of size $|R|^{kn+3}$.
	\end{remark}
\section{Results}

In this section, we apply the three techniques to construct best known quaternary Hermitian self-dual codes of lengths 24--40. We also apply the following well-known technique for constructing Hermitian self-dual codes referred to as the building-up construction.
	\begin{thm}\label{thm-4}\textup{(\cite{8})}
	Let $R$ be a commutative Frobenius ring with a fixed non-trivial involutory automorphism $\hip{\phantom{a}}:R\to R$. Let $G'$ be a generator matrix of a Hermitian self-dual $[2n,n]$-code $\mathcal{C}'$ over $R$ and let $\vct{r}_i$ denote the $i\nth$ row of $G'$. Let $\varepsilon\in R:\varepsilon\hip{\varepsilon}=-1$, $\vctg{\updelta}\in R^{2n}:\langle\vctg{\updelta},\vctg{\updelta}\rangle_H=-1$ and $\gamma_i=\langle\vct{r}_i,\vctg{\updelta}\rangle_H$ for $i\in[1\zint n]$. Then the matrix
		\begin{equation*}
		G=\begin{pmatrix}[cc|c]
		1 & 0 & \vctg{\updelta}\\\hline
		-\gamma_1 & \varepsilon\gamma_1 & \vct{r}_1\\
		-\gamma_2 & \varepsilon\gamma_2 & \vct{r}_2\\
		\vdots & \vdots & \vdots\\
		-\gamma_n & \varepsilon\gamma_n & \vct{r}_n
		\end{pmatrix}
		\end{equation*}
	is a generator matrix of a Hermitian self-dual $[2(n+1),n+1]$-code over $R$.
	\end{thm}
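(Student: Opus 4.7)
The plan is to follow the standard two-step verification for building-up constructions: first show that $G\hip{G}{}^T = \vct{0}$, which gives Hermitian self-orthogonality of the row space $\mathcal{C}$; then show that $|\mathcal{C}| = |R|^{n+1}$, which together with the Frobenius-ring duality $|\mathcal{C}||\mathcal{C}^{\bot_H}| = |R|^{2(n+1)}$ forces $\mathcal{C} = \mathcal{C}^{\bot_H}$.

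For self-orthogonality, I compute the block-wise entries of $G\hip{G}{}^T$ by pairing the top row $\vct{R}_0 = (1,0,\vctg{\updelta})$ against itself, against each $\vct{R}_i = (-\gamma_i,\varepsilon\gamma_i,\vct{r}_i)$, and the $\vct{R}_i$ against one another. The $(0,0)$-entry is $1 + 0 + \langle\vctg{\updelta},\vctg{\updelta}\rangle_H = 1 + (-1) = 0$. For $i \geq 1$, the $(0,i)$-entry is $-\hip{\gamma_i} + \langle\vctg{\updelta},\vct{r}_i\rangle_H$, which vanishes because $\hip{\gamma_i} = \hip{\langle\vct{r}_i,\vctg{\updelta}\rangle_H} = \langle\vctg{\updelta},\vct{r}_i\rangle_H$ (a direct consequence of $\hip{\phantom{a}}$ being an involutory automorphism). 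For $i,j \geq 1$ the $(i,j)$-entry is $\gamma_i\hip{\gamma_j} + \varepsilon\hip{\varepsilon}\gamma_i\hip{\gamma_j} + \langle\vct{r}_i,\vct{r}_j\rangle_H = (1+\varepsilon\hip{\varepsilon})\gamma_i\hip{\gamma_j} + 0 = 0$, using $\varepsilon\hip{\varepsilon}=-1$ and the self-orthogonality of $\mathcal{C}'$.

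For the cardinality, I would argue that distinct $R$-linear combinations of the rows of $G$ give distinct codewords. Suppose $c_0\vct{R}_0 + \sum_{i=1}^n c_i\vct{R}_i = \vct{0}$. Reading the second coordinate gives $\varepsilon\sum_{i=1}^n c_i\gamma_i = 0$; since $\varepsilon\hip{\varepsilon}=-1$ forces $\varepsilon$ to be a unit in $R$, we obtain $\sum c_i\gamma_i = 0$, and then the first coordinate $c_0 - \sum c_i\gamma_i = 0$ yields $c_0 = 0$. The remaining $2n$ coordinates reduce to $\sum_{i=1}^n c_i\vct{r}_i = \vct{0}$, from which linear independence of the rows of $G'$ (it is a generator matrix of a Hermitian self-dual code, so its row map $R^n \to \mathcal{C}'$ is a bijection by the Frobenius-ring cardinality count) gives $c_i = 0$ for all $i$. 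Thus $|\mathcal{C}| = |R|^{n+1}$.

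The main obstacle is the cardinality/independence step rather than the self-orthogonality calculation, which is essentially formal. The argument rests on $\varepsilon$ being a unit and on the rows of $G'$ being genuinely independent over $R$; both hold under the standing Frobenius-ring hypotheses used throughout the paper. Once these are in place, self-orthogonality together with $|\mathcal{C}| = |R|^{n+1}$ and the Frobenius duality $|\mathcal{C}||\mathcal{C}^{\bot_H}| = |R|^{2(n+1)}$ immediately give $\mathcal{C} = \mathcal{C}^{\bot_H}$, completing the proof.
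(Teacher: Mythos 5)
The paper does not actually prove Theorem \ref{thm-4}: it is quoted from \cite{8} as a known result, so there is no in-paper argument to compare yours against. Your proof is the standard one for the building-up construction and is correct: the self-orthogonality computation is right (the key identities being $\hip{\gamma_i}=\langle\vctg{\updelta},\vct{r}_i\rangle_H$, which holds since $\hip{\phantom{a}}$ is an involutory automorphism of a commutative ring, and $1+\varepsilon\hip{\varepsilon}=0$), and the cardinality step is sound because $\varepsilon$ is a unit (with inverse $-\hip{\varepsilon}$) and the row map of the $n\times 2n$ matrix $G'$ is a surjection between finite sets of equal size $|R|^{n}$ (the latter by Frobenius duality applied to $\mathcal{C}'$), hence a bijection.
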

	
We conduct the search for these codes using MATLAB and determine their properties using Q-extension \cite{7}. Table \ref{table-1} gives the hexadecimal notation system we use to represent elements of $\FF_4$ and $\FF_4+u\FF_4$.

	\begin{table}[h!]
	\caption{Hexadecimal notation system for elements of $\FF_4$ and $\FF_4+u\FF_4$.}\label{table-1}
	\begin{tabular}{ccc}\midrule
	$\FF_4$ & $\FF_4+u\FF_4$ & Symbol\\\midrule
    $0$ & $0$ & \texttt{0}\\
	$1$ & $1$ & \texttt{1}\\
	$w$ & $w$ & \texttt{2}\\
	$1+w$ & $1+w$ & \texttt{3}\\
	$-$ & $u$ & \texttt{4}\\
	$-$ & $1+u$ & \texttt{5}\\
	$-$ & $w+u$ & \texttt{6}\\
	$-$ & $1+w+u$ & \texttt{7}\\
	$-$ & $wu$ & \texttt{8}\\
	$-$ & $1+wu$ & \texttt{9}\\
	$-$ & $w+wu$ & \texttt{A}\\
	$-$ & $1+w+wu$ & \texttt{B}\\
	$-$ & $u+wu$ & \texttt{C}\\
	$-$ & $1+u+wu$ & \texttt{D}\\
	$-$ & $w+u+wu$ & \texttt{E}\\
	$-$ & $1+w+u+wu$ & \texttt{F}\\\midrule
	\end{tabular}
	\end{table}
	
Using Theorems \ref{thm-1} and \ref{thm-2}, we find 347 and 9 Hermitian self-dual $[24,12,8]$-codes, respectively. Each code constructed by Theorem \ref{thm-2} is equivalent to one of those constructed by Theorem \ref{thm-1}. All of the constructed Hermitian self-dual $[24,12,8]$-codes possess weight enumerators which have been previously known to exist \cite{21,18,10,11,16}.  

Using Theorem \ref{thm-3}, we find Hermitian self-dual $[26,13,8]$-codes with 7 unique weight enumerators which have been previously known to exist. By applying Theorem \ref{thm-4} to the Hermitian self-dual codes of length 24 over $\FF_4$ constructed by Theorem \ref{thm-1}, we obtain Hermitian self-dual $[26,13,8]$-codes with 119 unique weight enumerators. Of these weight enumerators, 71 are new with respect to those reported in \cite{10,11,12}.

Using Theorem \ref{thm-1} and \ref{thm-2}, we find 3 and 1 Hermitian self-dual $[28,14,10]$-codes, respectively. The code constructed by Theorem \ref{thm-2} is equivalent one of those constructed by Theorem \ref{thm-1}. The number of inequivalent Hermitian self-dual $[28,14,10]$-codes we find does not improve on the current bound \cite{19}.

Using Theorem \ref{thm-2} and Theorem \ref{thm-3}, we find one Hermitian self-dual $[30,15,12]$-code. It is unknown if there exists more than one Hermitian self-dual $[30,15,12]$-code up to equivalence. 

Using Theorem \ref{thm-1}, we find 1715 inequivalent Hermitian self-dual $[32,16,10]$-codes each of which has one of 90 unique weight enumerators. Of these weight enumerators, 82 are new with respect to those reported in \cite{10,11,13}.

Using Theorem \ref{thm-2}, we find Hermitian self-dual $[36,18,12]$-codes with 2 unique weight enumerators which have been previously known to exist. Using Theorem \ref{thm-1}, we find Hermitian self-dual $[36,18,12]$-codes with 4 unique weight enumerators. Of these weight enumerators, 2 are new with respect to those reported in \cite{10,13}.

Using Theorem \ref{thm-3}, we find Hermitian self-dual $[38,19,12]$-codes with 13 unique weight enumerators. Of these weight enumerators, 1 is new with respect to those reported in \cite{10,13}.

Using Theorems \ref{thm-1} and \ref{thm-2}, we find Hermitian self-dual $[40,20,12]$-codes with 251 and 28 unique weight enumerators, respectively. Of these weight enumerators, 252 are new with respect to those reported in \cite{10,13}.
\subsection{New Type IV \BM{[26,13,8]}-Codes}

The weight enumerator of a quaternary Hermitian self-dual $[26,13,8]$-code is given by
	\begin{align*}
	W_{26}=1+\alpha x^8+(10725-5\alpha)x^{10}+\cdots,
	\end{align*}
where $\alpha\in\ZZ$. The existence of codes with weight enumerator $W_{26}$ has previously been determined for
	\begin{enumerate}[label=]
	\item $\alpha\in\{3z:z=39,\lb52,\lb70,\lb76,\lb78,\lb79,\lb81,\lb82,\lb84,\lb85,\lb88,\lb90,\lb91,\lb93,\lb94,\lb96,\lb97,\lb99,\lb100,\lb102,\lb103,\lb105,\lb106,\lb108,\lb109,\lb111,\lb112,\lb115,\lb118,\lb120,\lb124,\lb126,\lb127,\lb130,\lb133,\lb136,\lb138,\lb139,\lb142,\lb144,\lb145,\lb147,\lb148,\lb151,\lb154,\lb162,\lb163,\lb202,\lb225,\lb235,\lb442\}$
	\end{enumerate}	
(see \cite{10,11,12}).

We obtain 71 new optimal quaternary Hermitian self-dual codes of length 26 which have weight enumerator $W_{26}$ for
	\begin{enumerate}[label=]
	\item $\alpha\in\{3z:z=51,\lb54,\lb55,\lb57,\lb58,\lb60,\lb61,\lb63,\lb64,\lb66,\lb67,\lb69,\lb72,\lb73,\lb75,\lb87,\lb114,\lb117,\lb121,\lb123,\lb129,\lb132,\lb135,\lb141,\lb150,\lb153,\lb156,\lb157,\lb159,\lb160,\lb165,\lb166,\lb168,\lb169,\lb171,\lb172,\lb174,\lb175,\lb177,\lb178,\lb180,\lb181,\lb183,\lb184,\lb186,\lb187,\lb189,\lb190,\lb192,\lb193,\lb195,\lb196,\lb198,\lb199,\lb201,\lb205,\lb208,\lb210,\lb211,\lb213,\lb214,\lb217,\lb219,\lb220,\lb222,\lb243,\lb249,\lb265,\lb280,\lb283,\lb331\}$.
	\end{enumerate}	

Of the 71 new codes, 51 are constructed by first applying Theorem \ref{thm-1} to obtain codes of length 24 over $\FF_4$ (Table \ref{table-26-1}) to which we then apply Theorem \ref{thm-4} (Table \ref{table-26-2}) and similarly 20 are constructed by first applying Theorem \ref{thm-1} to obtain codes of length 12 over $\FF_4+u\FF_4$ (Table \ref{table-26-3}) to the image of which under $\varphi_{\FF_4+u\FF_4}$ we then apply Theorem \ref{thm-4} (Table \ref{table-26-4}). In Table \ref{table-26-2}, we only list a sample of 25 codes to save space.

	\begin{table}[h!]
	\caption{Codes of length 24 over $\FF_4$ from Theorem \ref{thm-1} to which we apply Theorem \ref{thm-4} to obtain new quaternary Hermitian self-dual $[26,13,8]$-codes.}\label{table-26-1}
	\begin{tabular}{cccccc}\midrule
	$\mathcal{C}_{24,i}'$ & $\lambda$ & $\mu$ & $\vct{a}$ & $\vct{b}$ & $\vct{c}$\\\midrule
	1 & \texttt{1} & \texttt{1} & \texttt{(000333)} & \texttt{(110101)} & \texttt{(311023)}\\
	2 & \texttt{1} & \texttt{1} & \texttt{(001320)} & \texttt{(102032)} & \texttt{(103123)}\\
	3 & \texttt{1} & \texttt{1} & \texttt{(013032)} & \texttt{(230111)} & \texttt{(121202)}\\
	4 & \texttt{1} & \texttt{1} & \texttt{(023100)} & \texttt{(120310)} & \texttt{(110111)}\\
	5 & \texttt{1} & \texttt{1} & \texttt{(111212)} & \texttt{(231320)} & \texttt{(023232)}\\
	6 & \texttt{1} & \texttt{1} & \texttt{(121303)} & \texttt{(112002)} & \texttt{(220222)}\\
	7 & \texttt{1} & \texttt{1} & \texttt{(220330)} & \texttt{(022200)} & \texttt{(121202)}\\
	8 & \texttt{1} & \texttt{1} & \texttt{(233301)} & \texttt{(013021)} & \texttt{(123120)}\\
	9 & \texttt{1} & \texttt{1} & \texttt{(311001)} & \texttt{(012300)} & \texttt{(213210)}\\
	10 & \texttt{1} & \texttt{1} & \texttt{(322322)} & \texttt{(013322)} & \texttt{(021133)}\\
	11 & \texttt{1} & \texttt{2} & \texttt{(023012)} & \texttt{(302221)} & \texttt{(020000)}\\
	12 & \texttt{1} & \texttt{3} & \texttt{(123322)} & \texttt{(312000)} & \texttt{(000020)}\\
	13 & \texttt{2} & \texttt{1} & \texttt{(313300)} & \texttt{(132033)} & \texttt{(201332)}\\
	14 & \texttt{2} & \texttt{2} & \texttt{(301311)} & \texttt{(023330)} & \texttt{(221330)}\\
	15 & \texttt{2} & \texttt{2} & \texttt{(312221)} & \texttt{(100032)} & \texttt{(022133)}\\
	16 & \texttt{3} & \texttt{1} & \texttt{(030131)} & \texttt{(010220)} & \texttt{(111110)}\\
	17 & \texttt{3} & \texttt{1} & \texttt{(032201)} & \texttt{(021322)} & \texttt{(010201)}\\
	18 & \texttt{3} & \texttt{1} & \texttt{(320011)} & \texttt{(332012)} & \texttt{(203112)}\\
	19 & \texttt{3} & \texttt{1} & \texttt{(333310)} & \texttt{(010233)} & \texttt{(021212)}\\
	20 & \texttt{3} & \texttt{3} & \texttt{(210202)} & \texttt{(233320)} & \texttt{(302231)}\\\midrule
	\end{tabular}
	\vspace*{-6pt}
	\end{table}

	\begin{table}[h!]
	\caption{Sample of new quaternary Hermitian self-dual $[26,13,8]$-codes from applying Theorem \ref{thm-4} to $\mathcal{C}_{24,j}'$ as given in Table \ref{table-26-1}.}\label{table-26-2}
	\begin{tabular}{cccccc}\midrule
	$\mathcal{C}_{26,i}$ & $\mathcal{C}_{24,j}'$ & $\varepsilon$ & $\vctg{\updelta}$ & $\alpha$ & $|\aut{\mathcal{C}_{26,i}}|$\\\midrule
	1 & 9 & \texttt{1} & \texttt{(100322012302332000223211)} & 153 & $2\cdot 3^{2}$\\
	2 & 14 & \texttt{3} & \texttt{(000000000000020012300312)} & 162 & $3$\\
	3 & 6 & \texttt{1} & \texttt{(113021030121032301013110)} & 165 & $2^{3}\cdot 3^{2}$\\
	4 & 17 & \texttt{2} & \texttt{(013011212320020111233221)} & 171 & $3$\\
	5 & 16 & \texttt{3} & \texttt{(202113112010011102220021)} & 183 & $3$\\
	6 & 20 & \texttt{2} & \texttt{(220131301302312010110301)} & 189 & $3$\\
	7 & 19 & \texttt{3} & \texttt{(223030120113111120013331)} & 192 & $3$\\
	8 & 15 & \texttt{2} & \texttt{(113102330210331320300120)} & 207 & $3$\\
	9 & 18 & \texttt{3} & \texttt{(322333200221101233333323)} & 216 & $3$\\
	10 & 13 & \texttt{2} & \texttt{(331113313132213232321201)} & 342 & $3$\\
	11 & 3 & \texttt{3} & \texttt{(220320132223013133231113)} & 351 & $3$\\
	12 & 7 & \texttt{3} & \texttt{(301102303100323210012113)} & 405 & $3$\\
	13 & 10 & \texttt{3} & \texttt{(212111031302032101021311)} & 450 & $3$\\
	14 & 8 & \texttt{1} & \texttt{(201132302202112022200032)} & 468 & $2\cdot 3$\\
	15 & 8 & \texttt{3} & \texttt{(202101112103220023301203)} & 471 & $3$\\
	16 & 8 & \texttt{2} & \texttt{(123130331000001322103113)} & 477 & $2\cdot 3$\\
	17 & 8 & \texttt{2} & \texttt{(220021120302220003032103)} & 480 & $3^{2}$\\
	18 & 8 & \texttt{1} & \texttt{(033203131223103211232233)} & 495 & $3$\\
	19 & 8 & \texttt{2} & \texttt{(231133303202110122202101)} & 498 & $3$\\
	20 & 8 & \texttt{3} & \texttt{(100203102201010012113013)} & 504 & $2^{2}\cdot 3$\\
	21 & 8 & \texttt{1} & \texttt{(232031003333223011211202)} & 507 & $3$\\
	22 & 8 & \texttt{1} & \texttt{(023133033111211002221031)} & 513 & $3$\\
	23 & 8 & \texttt{2} & \texttt{(301322110132221310102103)} & 516 & $3$\\
	24 & 8 & \texttt{3} & \texttt{(330021100133123231101222)} & 522 & $2\cdot 3$\\
	25 & 8 & \texttt{3} & \texttt{(101132103322123130231333)} & 525 & $2^{5}\cdot 3$\\\midrule
	\end{tabular}
	\vspace*{-6pt}
	\end{table}

	\begin{table}[h!]
	\caption{Codes of length 12 over $\FF_4+u\FF_4$ from Theorem \ref{thm-1} to which we apply Theorem \ref{thm-4} to obtain new quaternary Hermitian self-dual $[26,13,8]$-codes.}\label{table-26-3}
	\begin{tabular}{cccccc}\midrule
	$\mathcal{C}_{12,i}'$ & $\lambda$ & $\mu$ & $\vct{a}$ & $\vct{b}$ & $\vct{c}$\\\midrule
	1 & \texttt{1} & \texttt{2} & \texttt{(5B6)} & \texttt{(B68)} & \texttt{(5C4)}\\
	2 & \texttt{1} & \texttt{5} & \texttt{(6D3)} & \texttt{(70D)} & \texttt{(3E6)}\\
	3 & \texttt{2} & \texttt{5} & \texttt{(62F)} & \texttt{(BF0)} & \texttt{(691)}\\
	4 & \texttt{2} & \texttt{5} & \texttt{(7CD)} & \texttt{(EDA)} & \texttt{(ED2)}\\
	5 & \texttt{2} & \texttt{5} & \texttt{(93E)} & \texttt{(7C9)} & \texttt{(B5B)}\\
	6 & \texttt{2} & \texttt{5} & \texttt{(A2D)} & \texttt{(ACE)} & \texttt{(F84)}\\
	7 & \texttt{2} & \texttt{5} & \texttt{(F4E)} & \texttt{(7BD)} & \texttt{(D1B)}\\
	8 & \texttt{3} & \texttt{1} & \texttt{(19D)} & \texttt{(CAF)} & \texttt{(AE7)}\\
	9 & \texttt{3} & \texttt{1} & \texttt{(2B8)} & \texttt{(755)} & \texttt{(ABB)}\\
	10 & \texttt{5} & \texttt{3} & \texttt{(835)} & \texttt{(12F)} & \texttt{(2C8)}\\
	11 & \texttt{5} & \texttt{F} & \texttt{(077)} & \texttt{(A22)} & \texttt{(842)}\\
	12 & \texttt{A} & \texttt{1} & \texttt{(12A)} & \texttt{(4EE)} & \texttt{(B62)}\\
	13 & \texttt{A} & \texttt{5} & \texttt{(A92)} & \texttt{(AB0)} & \texttt{(1D7)}\\
	14 & \texttt{F} & \texttt{1} & \texttt{(FFA)} & \texttt{(28F)} & \texttt{(E95)}\\
	15 & \texttt{F} & \texttt{5} & \texttt{(F4A)} & \texttt{(B1E)} & \texttt{(EA9)}\\\midrule
	\end{tabular}
	\vspace*{-6pt}
	\end{table}
	
	\begin{table}[h!]
	\caption{New quaternary Hermitian self-dual $[26,13,8]$-codes from applying Theorem \ref{thm-4} to $\varphi_{\FF_4+u\FF_4}(\mathcal{C}_{12,j}')$ as given in Table \ref{table-26-3}.}\label{table-26-4}
	\begin{tabular}{cccccc}\midrule
	$\mathcal{C}_{26,i}$ & $\mathcal{C}_{12,j}'$ & $\varepsilon$ & $\vctg{\updelta}$ & $\alpha$ & $|\aut{\mathcal{C}_{26,i}}|$\\\midrule
	26 & 3 & \texttt{3} & \texttt{(312212221032322210211120)} & 174 & $3$\\
	27 & 7 & \texttt{1} & \texttt{(010003212121112212011232)} & 180 & $3$\\
	28 & 1 & \texttt{1} & \texttt{(120212301003313113031212)} & 198 & $3$\\
	29 & 15 & \texttt{3} & \texttt{(033103000010230311302131)} & 201 & $3$\\
	30 & 8 & \texttt{1} & \texttt{(002111202112303110121123)} & 219 & $3$\\
	31 & 14 & \texttt{2} & \texttt{(002200300110121012020033)} & 225 & $3$\\
	32 & 4 & \texttt{1} & \texttt{(130113020303033122322112)} & 261 & $3$\\
	33 & 6 & \texttt{3} & \texttt{(110121211101221203200312)} & 363 & $3$\\
	34 & 5 & \texttt{2} & \texttt{(030010332223122320131030)} & 369 & $3$\\
	35 & 13 & \texttt{3} & \texttt{(113112211221100311303233)} & 387 & $3$\\
	36 & 2 & \texttt{2} & \texttt{(303312023022323300120110)} & 396 & $3$\\
	37 & 10 & \texttt{1} & \texttt{(300013300333121231303101)} & 423 & $3$\\
	38 & 12 & \texttt{3} & \texttt{(221010231201303010022223)} & 459 & $3$\\
	39 & 9 & \texttt{3} & \texttt{(121020201211100310201232)} & 594 & $3$\\
	40 & 11 & \texttt{3} & \texttt{(022213232122010231300111)} & 630 & $2\cdot 3^{2}$\\
	41 & 10 & \texttt{3} & \texttt{(013330212033312303301232)} & 639 & $2^{5}\cdot 3$\\
	42 & 11 & \texttt{1} & \texttt{(323022331330202311021330)} & 660 & $2^{2}\cdot 3$\\
	43 & 10 & \texttt{3} & \texttt{(032211313030013312130200)} & 666 & $2^{3}\cdot 3^{2}$\\
	44 & 10 & \texttt{1} & \texttt{(211100302012031010313322)} & 795 & $2^{3}\cdot 3^{2}$\\
	45 & 10 & \texttt{1} & \texttt{(112113302311332030122211)} & 840 & $2^{3}\cdot 3^{4}$\\\midrule
	\end{tabular}
	\vspace*{-6pt}
	\end{table}
	
\subsection{New Type IV \BM{[32,16,10]}-Codes}

The weight enumerator of a quaternary Hermitian self-dual $[32,16,10]$-code is given by
	\begin{align*}
	W_{32}=1+\alpha x^{10}+(67704-7\alpha)x^{12}+\cdots,
	\end{align*}
where $\alpha\in\ZZ$. The existence of codes with weight enumerator $W_{32}$ has previously been determined for
	\begin{enumerate}[label=]
	\item $\alpha\in\{3z:z=290,\lb292,\lb296,\lb298,\lb301,\lb302,\lb304,\lb307,\lb308,\lb310,\lb311,\lb313,\lb316,\lb320,\lb322,\lb325,\lb328,\lb334,\lb400,\lb448,\lb464,\lb544,\lb592,\lb608,\lb640,\lb656\}$
	\end{enumerate}	
(see \cite{10,11,13}).
	
We obtain 82 new best known quaternary Hermitian self-dual codes of length 32 which have weight enumerator $W_{32}$ for
	\begin{enumerate}[label=]
	\item $\alpha\in\{3z:z=404,\lb416,\lb424,\lb431,\lb440,\lb466,\lb472,\lb488,\lb490,\lb494,\lb496,\lb500,\lb502,\lb505,\lb506,\lb508,\lb511,\lb512,\lb514,\lb517,\lb518,\lb520,\lb521,\lb523,\lb524,\lb526,\lb527,\lb529,\lb530,\lb532,\lb533,\lb535,\lb536,\lb538,\lb539,\lb541,\lb542,\lb545,\lb547,\lb548,\lb550,\lb551,\lb553,\lb554,\lb556,\lb557,\lb559,\lb560,\lb562,\lb563,\lb565,\lb566,\lb568,\lb569,\lb571,\lb572,\lb574,\lb575,\lb577,\lb578,\lb580,\lb581,\lb583,\lb584,\lb586,\lb587,\lb589,\lb590,\lb593,\lb596,\lb598,\lb599,\lb602,\lb604,\lb610,\lb611,\lb614,\lb616,\lb620,\lb622,\lb628,\lb632\}$.
	\end{enumerate}
	
Of the 82 new codes, 70 are constructed by applying Theorem \ref{thm-1} over $\FF_4$ (Table \ref{table-32-1}) and 12 are constructed by applying Theorem \ref{thm-1} over $\FF_4+u\FF_4$ (Table \ref{table-32-2}). In Table \ref{table-32-1}, we only list a sample of 25 codes to save space.
	
	\begin{table}[h!]
	\caption{Sample of new quaternary Hermitian self-dual $[32,16,10]$-codes from Theorem \ref{thm-1} over $\FF_4$.}\label{table-32-1}
	\begin{tabular}{cccccccc}\midrule
	$\mathcal{C}_{32,i}$ & $\lambda$ & $\mu$ & $\vct{a}$ & $\vct{b}$ & $\vct{c}$ & $\alpha$ & $|\aut{\mathcal{C}_{32,i}}|$\\\midrule
	1 & \texttt{1} & \texttt{3} & \texttt{(10302231)} & \texttt{(31100022)} & \texttt{(11212033)} & 1212 & $2\cdot 3^{2}$\\
	2 & \texttt{2} & \texttt{3} & \texttt{(32211223)} & \texttt{(31100102)} & \texttt{(13102211)} & 1272 & $2^{3}\cdot 3$\\
	3 & \texttt{2} & \texttt{2} & \texttt{(30102321)} & \texttt{(13100031)} & \texttt{(21112012)} & 1293 & $3^{2}$\\
	4 & \texttt{2} & \texttt{2} & \texttt{(10010230)} & \texttt{(11110231)} & \texttt{(10300010)} & 1398 & $3$\\
	5 & \texttt{2} & \texttt{3} & \texttt{(13320102)} & \texttt{(13222303)} & \texttt{(13013122)} & 1416 & $2^{4}\cdot 3$\\
	6 & \texttt{1} & \texttt{1} & \texttt{(03232020)} & \texttt{(31223133)} & \texttt{(32202231)} & 1464 & $2^{3}\cdot 3$\\
	7 & \texttt{1} & \texttt{3} & \texttt{(02111223)} & \texttt{(01032222)} & \texttt{(13010203)} & 1470 & $3$\\
	8 & \texttt{2} & \texttt{2} & \texttt{(32213330)} & \texttt{(23310210)} & \texttt{(11132220)} & 1482 & $3$\\
	9 & \texttt{1} & \texttt{1} & \texttt{(30121002)} & \texttt{(10003310)} & \texttt{(10300131)} & 1488 & $2^{3}\cdot 3$\\
	10 & \texttt{1} & \texttt{2} & \texttt{(13113012)} & \texttt{(33011101)} & \texttt{(20300113)} & 1506 & $3$\\
	11 & \texttt{3} & \texttt{1} & \texttt{(21003333)} & \texttt{(21202313)} & \texttt{(02013201)} & 1515 & $3$\\
	12 & \texttt{2} & \texttt{2} & \texttt{(03233000)} & \texttt{(32333201)} & \texttt{(10302000)} & 1524 & $3$\\
	13 & \texttt{2} & \texttt{1} & \texttt{(03022133)} & \texttt{(10220103)} & \texttt{(32202231)} & 1533 & $3$\\
	14 & \texttt{2} & \texttt{3} & \texttt{(31320303)} & \texttt{(23201323)} & \texttt{(31210211)} & 1536 & $2^{3}\cdot 3$\\
	15 & \texttt{2} & \texttt{2} & \texttt{(10312210)} & \texttt{(21303003)} & \texttt{(00203133)} & 1551 & $3$\\
	16 & \texttt{3} & \texttt{2} & \texttt{(03110212)} & \texttt{(01131112)} & \texttt{(10323223)} & 1560 & $2^{3}\cdot 3$\\
	17 & \texttt{3} & \texttt{1} & \texttt{(32021203)} & \texttt{(11213011)} & \texttt{(20300232)} & 1563 & $3$\\
	18 & \texttt{2} & \texttt{1} & \texttt{(02203110)} & \texttt{(23032301)} & \texttt{(20010221)} & 1569 & $3$\\
	19 & \texttt{2} & \texttt{1} & \texttt{(02321321)} & \texttt{(02032223)} & \texttt{(00202020)} & 1572 & $3$\\
	20 & \texttt{3} & \texttt{3} & \texttt{(23103311)} & \texttt{(00232322)} & \texttt{(23001013)} & 1578 & $3$\\
	21 & \texttt{2} & \texttt{2} & \texttt{(02122313)} & \texttt{(21213323)} & \texttt{(22203030)} & 1581 & $3$\\
	22 & \texttt{2} & \texttt{2} & \texttt{(23311302)} & \texttt{(03102312)} & \texttt{(00030201)} & 1587 & $3$\\
	23 & \texttt{3} & \texttt{1} & \texttt{(03203120)} & \texttt{(10223210)} & \texttt{(00010101)} & 1590 & $3$\\
	24 & \texttt{3} & \texttt{1} & \texttt{(02011312)} & \texttt{(03023210)} & \texttt{(23110113)} & 1596 & $3$\\
	25 & \texttt{2} & \texttt{2} & \texttt{(23010312)} & \texttt{(02113003)} & \texttt{(33001023)} & 1599 & $3$\\\midrule
	\end{tabular}
	\vspace*{-6pt}
	\end{table}
	
	\begin{table}[h!]
	\caption{New quaternary Hermitian self-dual $[32,16,10]$-codes from Theorem \ref{thm-1} over $\FF_4+u\FF_4$.}\label{table-32-2}
	\begin{tabular}{cccccccc}\midrule
	$\mathcal{C}_{32,i}$ & $\lambda$ & $\mu$ & $\vct{a}$ & $\vct{b}$ & $\vct{c}$ & $\alpha$ & $|\aut{\mathcal{C}_{32,i}}|$\\\midrule
	26 & \texttt{3} & \texttt{2} & \texttt{(9C33)} & \texttt{(3EF3)} & \texttt{(C188)} & 1248 & $2^{3}\cdot 3$\\
	27 & \texttt{5} & \texttt{F} & \texttt{(29F1)} & \texttt{(3429)} & \texttt{(4B16)} & 1320 & $2\cdot 3^{2}$\\
	28 & \texttt{F} & \texttt{3} & \texttt{(34CB)} & \texttt{(38A9)} & \texttt{(8A80)} & 1500 & $2\cdot 3$\\
	29 & \texttt{F} & \texttt{1} & \texttt{(ECD5)} & \texttt{(1AC4)} & \texttt{(3C0C)} & 1518 & $2\cdot 3$\\
	30 & \texttt{1} & \texttt{3} & \texttt{(5CC9)} & \texttt{(CFAB)} & \texttt{(48C1)} & 1542 & $2\cdot 3$\\
	31 & \texttt{F} & \texttt{F} & \texttt{(3368)} & \texttt{(DEEF)} & \texttt{(A31C)} & 1554 & $2\cdot 3$\\
	32 & \texttt{A} & \texttt{3} & \texttt{(C6BD)} & \texttt{(1A26)} & \texttt{(09AB)} & 1806 & $2\cdot 3$\\
	33 & \texttt{5} & \texttt{3} & \texttt{(A00A)} & \texttt{(75CD)} & \texttt{(040F)} & 1830 & $2\cdot 3$\\
	34 & \texttt{1} & \texttt{2} & \texttt{(62B3)} & \texttt{(D70F)} & \texttt{(5FA0)} & 1842 & $2\cdot 3$\\
	35 & \texttt{3} & \texttt{F} & \texttt{(5F19)} & \texttt{(CFE2)} & \texttt{(5241)} & 1860 & $2\cdot 3$\\
	36 & \texttt{F} & \texttt{1} & \texttt{(ADE2)} & \texttt{(E9C3)} & \texttt{(F0FF)} & 1866 & $2\cdot 3$\\
	37 & \texttt{3} & \texttt{A} & \texttt{(16AF)} & \texttt{(0BBE)} & \texttt{(FC5F)} & 1884 & $2\cdot 3$\\\midrule
	\end{tabular}
	\vspace*{-6pt}
	\end{table}

\subsection{New Type IV \BM{[36,18,12]}-Codes}

The weight enumerator of a quaternary Hermitian self-dual $[36,18,12]$-code is given by
	\begin{align*}
	W_{36}=1+\alpha x^{12}+(771120-12\alpha)x^{14}+\cdots,
	\end{align*}
where $\alpha\in\ZZ$. The existence of codes with weight enumerator $W_{36}$ has previously been determined for
	\begin{enumerate}[label=]
	\item $\alpha\in\{9z:z=2316,3196\}$
	\end{enumerate}	
(see \cite{10,13}). 

We obtain 2 new best known quaternary Hermitian self-dual codes of length 36 which have weight enumerator $W_{36}$ for
	\begin{enumerate}[label=]
	\item $\alpha\in\{9z:z=2172,2461\}$.
	\end{enumerate}

The new codes are constructed by applying Theorem \ref{thm-1} over $\FF_4$ (Table \ref{table-36-1}).

	\begin{table}[h!]
	\caption{New quaternary Hermitian self-dual $[36,18,12]$-codes from Theorem \ref{thm-1} over $\FF_4$.}\label{table-36-1}
	\begin{tabular}{cccccccc}\midrule
	$\mathcal{C}_{36,i}$ & $\lambda$ & $\mu$ & $\vct{a}$ & $\vct{b}$ & $\vct{c}$ & $\alpha$ & $|\aut{\mathcal{C}_{36,i}}|$\\\midrule
	1 & \texttt{3} & \texttt{2} & \texttt{(310331201)} & \texttt{(330313112)} & \texttt{(001133121)} & 19548 & $2\cdot 3^{3}$\\
	2 & \texttt{1} & \texttt{1} & \texttt{(021223233)} & \texttt{(301010120)} & \texttt{(323302310)} & 22149 & $2\cdot 3^{3}$\\\midrule
	\end{tabular}
	\vspace*{-6pt}
	\end{table}

\subsection{New Type IV \BM{[38,19,12]}-Codes}

The weight enumerator of a quaternary Hermitian self-dual $[38,19,12]$-code is given by
	\begin{align*}
	W_{38}=1+\alpha x^{12}+(430236-9\alpha)x^{14}+\cdots,
	\end{align*}
where $\alpha\in\ZZ$. The existence of codes with weight enumerator $W_{36}$ has previously been determined for
	\begin{enumerate}[label=]
	\item $\alpha\in\{3z:z=3249,\lb3279,\lb3354,\lb3483,\lb3534,\lb3714,\lb3795,\lb3800,\lb3831,\lb3897,\lb3927,\lb3987,\lb4383,\lb6327\}$
	\end{enumerate}	
(see \cite{10,13}). 

We obtain 1 new best known quaternary Hermitian self-dual code of length 38 which has weight enumerator $W_{38}$ for
	\begin{enumerate}[label=]
	\item $\alpha\in\{3z:z=3384\}$.
	\end{enumerate}
	
The new code is constructed by applying Theorem \ref{thm-3} over $\FF_4$ (Table \ref{table-38-1}).

	\begin{table}[h!]
	\caption{New quaternary Hermitian self-dual $[38,19,12]$-code from Theorem \ref{thm-3} over $\FF_4$.}\label{table-38-1}
	\begin{tabular}{cccccccccc}\midrule
	$\mathcal{C}_{38,i}$ & $k$ & $x_1$ & $x_2$ & $x_3$ & $\vct{a}_1$ & $\vct{a}_2$ & $\vct{a}_3$ & $\alpha$ & $|\aut{\mathcal{C}_{38,i}}|$\\\midrule
	1 & 3 & \texttt{3} & \texttt{2} & \texttt{1} & \texttt{(222010)} & \texttt{(012133)} & \texttt{(210331)} & 10152 & $2^{5}\cdot 3^{3}$\\\midrule
	\end{tabular}
	\vspace*{-6pt}
	\end{table}

\subsection{New Type IV \BM{[40,20,12]}-Codes}

The weight enumerator of a quaternary Hermitian self-dual $[40,20,12]$-code is given by
	\begin{align*}
	W_{40}=1+\alpha x^{12}+(232560-6\alpha)x^{14}+\cdots,
	\end{align*}
where $\alpha\in\ZZ$. The existence of codes with weight enumerator $W_{40}$ has previously been determined for
	\begin{enumerate}[label=]
	\item $\alpha\in\{3z:z=1387,\lb1425,\lb1470,\lb1510,\lb1570,\lb1580,\lb1590,\lb1650,\lb1690,\lb1710,\lb1720,\lb1730,\lb1740,\lb1750,\lb1800,\lb1810,\lb1824,\lb1850,\lb1860,\lb1870,\lb1881,\lb1919,\lb1938,\lb1960,\lb1990,\lb2010,\lb2033,\lb2070,\lb2110,\lb2210,\lb2261,\lb2310,\lb2370,\lb2584,\lb2940\}$
	\end{enumerate}	
(see \cite{10,13}). 

We obtain 252 new best known quaternary Hermitian self-dual codes of length 40 which have weight enumerator $W_{40}$ for
	\begin{enumerate}[label=]
	\item $\alpha\in\{3z:z=1265,\lb1285,\lb1330,\lb1340,\lb1345,\lb1350,\lb1360,\lb1365,\lb1380,\lb1385,\lb1390,\lb1395,\lb1400,\lb1405,\lb1410,\lb1415,\lb1420,\lb1430,\lb1435,\lb1440,\lb1445,\lb1450,\lb1455,\lb1460,\lb1465,\lb1475,\lb1480,\lb1485,\lb1490,\lb1495,\lb1500,\lb1505,\lb1515,\lb1520,\lb1525,\lb1530,\lb1535,\lb1540,\lb1545,\lb1548,\lb1550,\lb1555,\lb1560,\lb1565,\lb1575,\lb1585,\lb1595,\lb1598,\lb1600,\lb1605,\lb1610,\lb1615,\lb1620,\lb1624,\lb1625,\lb1630,\lb1631,\lb1635,\lb1640,\lb1644,\lb1645,\lb1655,\lb1656,\lb1660,\lb1662,\lb1665,\lb1666,\lb1667,\lb1670,\lb1674,\lb1675,\lb1680,\lb1681,\lb1682,\lb1685,\lb1686,\lb1688,\lb1692,\lb1695,\lb1699,\lb1700,\lb1702,\lb...,\lb1706,\lb1709,\lb1713,\lb1714,\lb1715,\lb1721,\lb...,\lb1728,\lb1731,\lb1735,\lb1736,\lb1737,\lb1741,\lb1742,\lb1745,\lb1746,\lb1748,\lb1749,\lb1754,\lb1755,\lb1756,\lb1758,\lb1759,\lb1760,\lb1765,\lb1766,\lb1769,\lb...,\lb1772,\lb1774,\lb1775,\lb1776,\lb1778,\lb1780,\lb1781,\lb1782,\lb1785,\lb1788,\lb1790,\lb1792,\lb...,\lb1795,\lb1805,\lb1812,\lb1813,\lb1815,\lb1820,\lb1822,\lb1825,\lb1828,\lb1830,\lb1835,\lb1840,\lb1845,\lb1854,\lb1855,\lb1856,\lb1858,\lb1865,\lb1873,\lb1875,\lb1876,\lb1880,\lb1885,\lb1890,\lb1895,\lb1897,\lb1900,\lb1905,\lb1910,\lb1915,\lb1920,\lb1925,\lb1930,\lb1935,\lb1940,\lb1945,\lb1950,\lb1955,\lb1965,\lb1970,\lb1972,\lb1975,\lb1980,\lb1985,\lb1995,\lb2000,\lb2005,\lb2015,\lb2020,\lb2025,\lb2029,\lb2030,\lb2035,\lb2040,\lb2045,\lb2050,\lb2055,\lb2060,\lb2065,\lb2075,\lb2080,\lb2085,\lb2090,\lb2095,\lb2100,\lb2105,\lb2115,\lb2120,\lb2125,\lb2130,\lb2135,\lb2140,\lb2145,\lb2150,\lb2155,\lb2160,\lb2165,\lb2170,\lb2175,\lb2180,\lb2185,\lb2190,\lb2195,\lb2200,\lb2205,\lb2220,\lb2230,\lb2240,\lb2245,\lb2250,\lb2270,\lb2275,\lb2280,\lb2285,\lb2290,\lb2300,\lb2320,\lb2325,\lb2330,\lb2340,\lb2350,\lb2355,\lb2360,\lb2380,\lb2400,\lb2410,\lb2430,\lb2440,\lb2490,\lb2495,\lb2540,\lb2550,\lb2580,\lb2600,\lb2660,\lb2705,\lb2740,\lb2760,\lb3310\}$.
	\end{enumerate}

Of the 252 new codes, 182 are constructed by applying Theorem \ref{thm-1} over $\FF_4$ (Table \ref{table-40-1}); 67 are constructed by applying Theorem \ref{thm-1} over $\FF_4+u\FF_4$ (Table \ref{table-40-2}) and 3 are constructed by applying Theorem \ref{thm-2} over $\FF_4$ (Table \ref{table-40-3}). In both Tables \ref{table-40-1} and \ref{table-40-2}, we only list a sample of 25 codes to save space.

	\begin{table}[h!]
	\caption{Sample of new quaternary Hermitian self-dual $[40,20,12]$-codes from Theorem \ref{thm-1} over $\FF_4$.}\label{table-40-1}
	\begin{tabular}{cccccccc}\midrule
	$\mathcal{C}_{40,i}$ & $\lambda$ & $\mu$ & $\vct{a}$ & $\vct{b}$ & $\vct{c}$ & $\alpha$ & $|\aut{\mathcal{C}_{40,i}}|$\\\midrule
	1 & \texttt{1} & \texttt{1} & \texttt{(0010301300)} & \texttt{(1212023113)} & \texttt{(1212101212)} & 3795 & $2\cdot 3\cdot 5$\\
	2 & \texttt{2} & \texttt{3} & \texttt{(3010333100)} & \texttt{(0331223323)} & \texttt{(3221002012)} & 3855 & $2^{2}\cdot 3\cdot 5$\\
	3 & \texttt{2} & \texttt{3} & \texttt{(3111233122)} & \texttt{(0231102110)} & \texttt{(0023020120)} & 3990 & $2^{2}\cdot 3\cdot 5$\\
	4 & \texttt{2} & \texttt{3} & \texttt{(1323200332)} & \texttt{(3112223320)} & \texttt{(3311322330)} & 4020 & $2\cdot 3\cdot 5$\\
	5 & \texttt{2} & \texttt{3} & \texttt{(2021122302)} & \texttt{(2232002130)} & \texttt{(3122232320)} & 4035 & $2^{2}\cdot 3\cdot 5$\\
	6 & \texttt{2} & \texttt{3} & \texttt{(0110332302)} & \texttt{(3311203021)} & \texttt{(1020013223)} & 4050 & $2\cdot 3\cdot 5$\\
	7 & \texttt{2} & \texttt{3} & \texttt{(2321203013)} & \texttt{(1002200000)} & \texttt{(0000000300)} & 4080 & $2^{2}\cdot 3\cdot 5$\\
	8 & \texttt{2} & \texttt{3} & \texttt{(3102032023)} & \texttt{(2232021033)} & \texttt{(1023001331)} & 4095 & $2^{2}\cdot 3\cdot 5$\\
	9 & \texttt{1} & \texttt{1} & \texttt{(0330131303)} & \texttt{(1231322020)} & \texttt{(0100323211)} & 4140 & $2\cdot 3\cdot 5$\\
	10 & \texttt{3} & \texttt{2} & \texttt{(2121010233)} & \texttt{(0302123032)} & \texttt{(1333232301)} & 4155 & $2\cdot 3\cdot 5$\\
	11 & \texttt{3} & \texttt{2} & \texttt{(0023012120)} & \texttt{(3320301220)} & \texttt{(0313120033)} & 4170 & $2\cdot 3\cdot 5$\\
	12 & \texttt{1} & \texttt{1} & \texttt{(1303123321)} & \texttt{(0220111331)} & \texttt{(1331302232)} & 4185 & $2^{2}\cdot 3\cdot 5$\\
	13 & \texttt{2} & \texttt{3} & \texttt{(1121131303)} & \texttt{(1011000231)} & \texttt{(2221210131)} & 4200 & $2\cdot 3\cdot 5$\\
	14 & \texttt{3} & \texttt{2} & \texttt{(0133033131)} & \texttt{(1302013013)} & \texttt{(2230021032)} & 4215 & $2\cdot 3\cdot 5$\\
	15 & \texttt{3} & \texttt{2} & \texttt{(1303120121)} & \texttt{(2102310210)} & \texttt{(1321321320)} & 4230 & $2^{2}\cdot 3\cdot 5$\\
	16 & \texttt{2} & \texttt{3} & \texttt{(0011032331)} & \texttt{(2301130211)} & \texttt{(0312131122)} & 4245 & $2^{2}\cdot 3\cdot 5$\\
	17 & \texttt{2} & \texttt{3} & \texttt{(2231132101)} & \texttt{(0021212121)} & \texttt{(2300302002)} & 4260 & $2\cdot 3\cdot 5$\\
	18 & \texttt{3} & \texttt{2} & \texttt{(1331212331)} & \texttt{(2223232310)} & \texttt{(3310223311)} & 4290 & $2^{2}\cdot 3\cdot 5$\\
	19 & \texttt{3} & \texttt{2} & \texttt{(1033123303)} & \texttt{(1332130232)} & \texttt{(0323333233)} & 4305 & $2\cdot 3\cdot 5$\\
	20 & \texttt{2} & \texttt{3} & \texttt{(1103013132)} & \texttt{(1010121102)} & \texttt{(3102323212)} & 4320 & $2\cdot 3\cdot 5$\\
	21 & \texttt{1} & \texttt{1} & \texttt{(2332231303)} & \texttt{(1111002223)} & \texttt{(0200131322)} & 4335 & $2\cdot 3\cdot 5$\\
	22 & \texttt{3} & \texttt{2} & \texttt{(3110020212)} & \texttt{(2032110332)} & \texttt{(0203003210)} & 4350 & $2\cdot 3\cdot 5$\\
	23 & \texttt{3} & \texttt{2} & \texttt{(0133213002)} & \texttt{(1121211002)} & \texttt{(1012333020)} & 4365 & $2\cdot 3\cdot 5$\\
	24 & \texttt{3} & \texttt{3} & \texttt{(0032013101)} & \texttt{(1120332311)} & \texttt{(0000000030)} & 4380 & $2^{2}\cdot 3\cdot 5$\\
	25 & \texttt{2} & \texttt{3} & \texttt{(0313223032)} & \texttt{(2132201232)} & \texttt{(0211213223)} & 4395 & $2\cdot 3\cdot 5$\\\midrule
	\end{tabular}
	\vspace*{-6pt}
	\end{table}
	
	\begin{table}[h!]
	\caption{Sample of new quaternary Hermitian self-dual $[40,20,12]$-codes from Theorem \ref{thm-1} over $\FF_4+u\FF_4$.}\label{table-40-2}
	\begin{tabular}{cccccccc}\midrule
	$\mathcal{C}_{40,i}$ & $\lambda$ & $\mu$ & $\vct{a}$ & $\vct{b}$ & $\vct{c}$ & $\alpha$ & $|\aut{\mathcal{C}_{40,i}}|$\\\midrule
	26 & \texttt{A} & \texttt{3} & \texttt{(11C05)} & \texttt{(E31C7)} & \texttt{(7D9F6)} & 4644 & $2\cdot 3$\\
	27 & \texttt{F} & \texttt{F} & \texttt{(69F28)} & \texttt{(66279)} & \texttt{(01084)} & 4794 & $2\cdot 3$\\
	28 & \texttt{1} & \texttt{3} & \texttt{(BD417)} & \texttt{(55615)} & \texttt{(0F0C4)} & 4872 & $2\cdot 3$\\
	29 & \texttt{A} & \texttt{3} & \texttt{(A4E4F)} & \texttt{(42F5E)} & \texttt{(B5B25)} & 4893 & $2\cdot 3$\\
	30 & \texttt{5} & \texttt{2} & \texttt{(AB405)} & \texttt{(FEECE)} & \texttt{(83444)} & 4932 & $2\cdot 3$\\
	31 & \texttt{5} & \texttt{2} & \texttt{(D12CE)} & \texttt{(2EA65)} & \texttt{(A8484)} & 4968 & $2\cdot 3$\\
	32 & \texttt{2} & \texttt{1} & \texttt{(8147A)} & \texttt{(5B707)} & \texttt{(91F91)} & 4986 & $2\cdot 3$\\
	33 & \texttt{1} & \texttt{2} & \texttt{(F11B1)} & \texttt{(A6C40)} & \texttt{(C0108)} & 4998 & $2\cdot 3$\\
	34 & \texttt{A} & \texttt{A} & \texttt{(DF57B)} & \texttt{(60252)} & \texttt{(1B31E)} & 5001 & $2\cdot 3$\\
	35 & \texttt{A} & \texttt{2} & \texttt{(EF44A)} & \texttt{(68E3B)} & \texttt{(04140)} & 5022 & $2\cdot 3$\\
	36 & \texttt{F} & \texttt{2} & \texttt{(4ED9F)} & \texttt{(84931)} & \texttt{(ABD1F)} & 5043 & $2\cdot 3$\\
	37 & \texttt{2} & \texttt{A} & \texttt{(842AA)} & \texttt{(517D0)} & \texttt{(04380)} & 5046 & $2\cdot 3$\\
	38 & \texttt{2} & \texttt{1} & \texttt{(393D8)} & \texttt{(1C466)} & \texttt{(5DFD5)} & 5058 & $2\cdot 3$\\
	39 & \texttt{3} & \texttt{A} & \texttt{(0DCDD)} & \texttt{(DDE45)} & \texttt{(A3267)} & 5064 & $2\cdot 3$\\
	40 & \texttt{3} & \texttt{A} & \texttt{(86078)} & \texttt{(529FB)} & \texttt{(AF92E)} & 5076 & $2\cdot 3$\\
	41 & \texttt{2} & \texttt{5} & \texttt{(C163C)} & \texttt{(82898)} & \texttt{(E55D1)} & 5097 & $2\cdot 3$\\
	42 & \texttt{F} & \texttt{2} & \texttt{(F7ABF)} & \texttt{(4C89B)} & \texttt{(B1AD5)} & 5106 & $2\cdot 3$\\
	43 & \texttt{2} & \texttt{1} & \texttt{(A8806)} & \texttt{(4646D)} & \texttt{(D35D5)} & 5109 & $2\cdot 3$\\
	44 & \texttt{5} & \texttt{1} & \texttt{(16C69)} & \texttt{(73F32)} & \texttt{(D59B9)} & 5112 & $2\cdot 3$\\
	45 & \texttt{5} & \texttt{F} & \texttt{(A8992)} & \texttt{(4A514)} & \texttt{(3E272)} & 5118 & $2\cdot 3$\\
	46 & \texttt{F} & \texttt{3} & \texttt{(09409)} & \texttt{(2F629)} & \texttt{(7E37E)} & 5127 & $2\cdot 3$\\
	47 & \texttt{A} & \texttt{2} & \texttt{(8FF41)} & \texttt{(3FC7D)} & \texttt{(9179E)} & 5139 & $2\cdot 3$\\
	48 & \texttt{F} & \texttt{2} & \texttt{(6C1A6)} & \texttt{(BFC90)} & \texttt{(96D1E)} & 5142 & $2\cdot 3$\\
	49 & \texttt{1} & \texttt{2} & \texttt{(AB779)} & \texttt{(7ED06)} & \texttt{(291EF)} & 5163 & $2\cdot 3$\\
	50 & \texttt{3} & \texttt{5} & \texttt{(1425F)} & \texttt{(E8AD4)} & \texttt{(00C18)} & 5166 & $2\cdot 3$\\\midrule
	\end{tabular}
	\vspace*{-6pt}
	\end{table}
	
	\begin{table}[h!]
	\caption{New quaternary Hermitian self-dual $[40,20,12]$-codes from Theorem \ref{thm-3} over $\FF_4$.}\label{table-40-3}
	\begin{tabular}{cccccccc}\midrule
	$\mathcal{C}_{40,i}$ & $k$ & $\lambda$ & $\mu$ & $\vct{a}_1$ & $\vct{a}_2$ & $\alpha$ & $|\aut{\mathcal{C}_{40,i}}|$\\\midrule
	51 & 2 & \texttt{1} & \texttt{3} & \texttt{(3212220310)} & \texttt{(2302200133)} & 5760 & $2^{3}\cdot 3\cdot 5$\\
	52 & 2 & \texttt{1} & \texttt{2} & \texttt{(2213113102)} & \texttt{(2022022132)} & 5910 & $2^{3}\cdot 3\cdot 5$\\
	53 & 2 & \texttt{2} & \texttt{1} & \texttt{(2121231110)} & \texttt{(0020102020)} & 6660 & $2^{4}\cdot 3\cdot 5$\\\midrule
	\end{tabular}
	\vspace*{-6pt}
	\end{table}

\section{Conclusion}

In this work, we presented three techniques for constructing Hermitian self-dual codes using $\lambda$-circulant matrices. The first technique was derived as the Hermitian analogue of a modified four circulant technique for constructing self-dual codes given in \cite{17}. The other two techniques were derived as the Hermitian analogues of the techniques for constructing self-dual codes given in \cite{6}. We proved the necessary conditions for these techniques to produce Hermitian self-dual codes using a specialised mapping $\Theta$ which is inherently associated with $\lambda$-circulant matrices. We explored the ability of these techniques by implementing them to construct best known and optimal quaternary Hermitian self-dual codes which were previously not known to exist. In particular, we were able to construct new optimal codes of length 26 and many best known codes of lengths 32, 36, 38 and 40.

The performance of the first technique in searching for Hermitian self-dual codes was far superior to that of the other techniques. However, the first technique is restricted in that it can only be used to construct codes of length $4n$. The main advantage of using the other techniques is being able to construct codes of a greater variety of lengths. On the other hand, the efficiency of the second and third techniques drops drastically as length increases.

We were unable to improve on the best known minimum distance of quaternary Hermitian self-dual codes for lengths 32--40. Due to computational limitations, we did not investigate constructing codes of lengths greater than 40. Moreover, the codes we did construct were obtained by random searches alone. However, as in \cite{17}, the implementation of the mapping $\Theta$ enabled us to reduce the number of computations required in our algorithms and hence increase the rate at which we could find Hermitian self-dual codes.

A suggestion for future work could be attempting to construct codes of lengths greater than 40 or possibly even investigate feasible ways of classifying the codes yielded by the techniques for reasonably small lengths. Another suggestion would be to consider different families of unitary matrices for $C$.

\bibliographystyle{plainnat}
\bibliography{paper2}
\end{document}